\documentclass[11pt,reqno]{amsart}

%%%%%%%%%%%%%%%%%%%%%%%%%%%%%%%%%%%%%%%%%%%%%%%%%%%
%								Packages									         %
%%%%%%%%%%%%%%%%%%%%%%%%%%%%%%%%%%%%%%%%%%%%%%%%%%%

\usepackage{microtype}

\usepackage[T1]{fontenc}

\usepackage{amsmath}								%math
\usepackage{amssymb}
\usepackage{amsthm}
\usepackage{amscd}
\usepackage{amsfonts}
\usepackage{stmaryrd}

\usepackage{euler}

\usepackage{extarrows}

\usepackage[backref, colorlinks, linktocpage, citecolor = magenta, linkcolor = blue]{hyperref}
\usepackage{color}
\usepackage[backrefs]{amsrefs}
%\definecolor{linkred}{rgb}{0.6,0,0}
%\definecolor{linkblue}{rgb}{0,0,0.6}

% tikz
\usepackage{tikz}									
\usetikzlibrary{matrix}
\usetikzlibrary{patterns}
\usetikzlibrary{matrix}
\usetikzlibrary{positioning}
\usetikzlibrary{decorations.pathmorphing}
\usetikzlibrary{cd}

\usepackage{fullpage}
\usepackage{enumerate}

\linespread{1.1}

%%%%%%%%%%%%%%%%%%%%%%%%%%%%%%%%%%%%%%%%%%%%%%%%%%%
%								Theorems 								         %
%%%%%%%%%%%%%%%%%%%%%%%%%%%%%%%%%%%%%%%%%%%%%%%%%%%
		
\newtheorem{theorem}{Theorem}[section]
\newtheorem{lemma}[theorem]{Lemma}
\newtheorem{proposition}[theorem]{Proposition}
\newtheorem{corollary}[theorem]{Corollary} 

\theoremstyle{definition}
\newtheorem{definition}[theorem]{Definition}
\newtheorem{example}[theorem]{Example}

\newtheorem{remark}[theorem]{Remark}

%%%%%%%%%%%%%%%%%%%%%%%%%%%%%%%%%%%%%%%%%%%%%%%%%%%
%								Operators									         %
%%%%%%%%%%%%%%%%%%%%%%%%%%%%%%%%%%%%%%%%%%%%%%%%%%%

%\newcommand{\marg}[1]{\normalsize{{\color{red}\footnote{{#1}}}{\marginpar[{\color{red}\hfill\tiny\thefootnote$\rightarrow$}]{{\color{red}$\leftarrow$\tiny\thefootnote}}}}}
\newcommand{\marg}[1]{\null}

\newcommand{\marge}[1]{\null}

\newcommand{\R}{\mathbb{R}}
\newcommand{\Rbar}{\overline{\mathbb{R}}}
\newcommand{\Z}{\mathbb{Z}}

\newcommand{\N}{\mathbb{N}}
\newcommand{\PP}{\mathbb{P}}

\newcommand{\A}{\mathbb{A}}
\newcommand{\G}{\mathbb{G}}

\newcommand{\Deltabar}{\overline{\Delta}}
\newcommand{\phibar}{\overline{\phi}}
\newcommand{\Sigmabar}{\overline{\Sigma}}
\newcommand{\sigmabar}{\overline{\sigma}}
\newcommand{\Mbar}{\overline{M}}
\newcommand{\calObar}{\overline{\mathcal{O}}}
\newcommand{\calA}{\mathcal{A}}
\newcommand{\calB}{\mathcal{B}}

\newcommand{\calF}{\mathcal{F}}

\newcommand{\calH}{\mathcal{H}}
\newcommand{\calM}{\mathcal{M}}
\newcommand{\calO}{\mathcal{O}}

\newcommand{\calX}{\mathcal{X}}
\newcommand{\calY}{\mathcal{Y}}
\newcommand{\calMbar}{\overline{\mathcal{M}}}
\newcommand{\frakS}{\mathfrak{S}}

\newcommand{\frakp}{\mathfrak{p}}

\newcommand{\bfp}{\mathbf{p}}

\DeclareMathOperator{\Spec}{Spec}
\DeclareMathOperator{\Hom}{Hom}

\DeclareMathOperator{\Aut}{Aut}

\DeclareMathOperator{\Isom}{Isom}
\DeclareMathOperator{\trop}{trop}
\DeclareMathOperator{\val}{val}

\DeclareMathOperator{\LOG}{LOG}
\DeclareMathOperator{\FAN}{FAN}
\DeclareMathOperator{\pr}{pr}

\DeclareMathOperator{\HOM}{HOM}

%%%%%%%%%%%%%%%%%%%%%%%%%%%%%%%%%%%%%%%%%%%%%%%%%%%
%                                                                          Title                                                                             %
%%%%%%%%%%%%%%%%%%%%%%%%%%%%%%%%%%%%%%%%%%%%%%%%%%%

\title[Non-Archimedean geometry of Artin fans]{Non-Archimedean geometry of Artin fans}

\author{Martin Ulirsch}
\address{Institut f\"ur Mathematik\\
Goethe-Universit\"at Frankfurt am Main\\
Robert-Mayer-Str. 6-8\\
60325 Frankfurt am Main\\
Germany}
\email{ulirsch@math.uni-frankfurt.de}

\subjclass[2010]{14T05; 14A20; 32P05}
\date{\today}
\thanks{M.U.'s research was supported in part by funds from BSF grant 201025, NSF grants DMS0901278 and DMS1162367, and by the SFB/TR 45 'Periods, Moduli Spaces and Arithmetic of Algebraic Varieties' of the DFG (German Research Foundation) as well as by the Hausdorff Center for Mathematics at the University of Bonn.} 

%%%%%%%%%%%%%%%%%%%%%%%%%%%%%%%%%%%%%%%%%%%%%%%%%%%%%%

\begin{document}

\maketitle

\begin{abstract}
The purpose of this article is to study the role of Artin fans in tropical and non-Archimedean geometry. Artin fans are logarithmic algebraic stacks that can be described completely in terms of combinatorial objects, so called Kato stacks, a stack-theoretic generalization of K. Kato's notion of a fan. Every logarithmic algebraic stack admits a tautological strict morphism $\phi_\mathcal{X}:\mathcal{X}\rightarrow\mathcal{A}_\mathcal{X}$ to an associated Artin fan. The main result of this article is that, on the level of underlying topological spaces, the natural functorial tropicalization map of $\mathcal{X}$ is nothing but the non-Archimedean analytic map associated to $\phi_\mathcal{X}$ by applying Thuillier's generic fiber functor. Using this framework, we give a reinterpretation of the main result of Abramovich-Caporaso-Payne identifying the moduli space of tropical curves with the non-Archimedean skeleton of the corresponding algebraic moduli space. 
\end{abstract}

\setcounter{tocdepth}{1}
\tableofcontents

%%%%%%%%%%%%%%%%%%%%%%%%%%%%%%%%%%%%%%%%%%%%%%%%%%%%%%

\section{Introduction}

Let $k$ be an algebraically closed field that is endowed with the trivial absolute value. Inspired by the construction of the non-Archimedean skeleton associated to a toroidal embedding in \cite{Thuillier_toroidal} and recent advances in the tropical geometry of moduli spaces in \cite{AbramovichCaporasoPayne_tropicalmoduli}, the author, in  \cite{Ulirsch_functroplogsch}, has defined  a \emph{tropicalization map} $\trop_X\mathrel{\mathop:}X^\beth\rightarrow \Sigmabar_X$ associated to every fine and saturated logarithmic scheme locally of finite type over $k$  that generalizes earlier constructions for split algebraic tori \cite{EinsiedlerKapranovLind_amoebas, Gubler_tropvar, Gubler_guide} and toric varieties \cite{Kajiwara_troptoric, Payne_anallimittrop}. 

The heuristic that guides this construction is based on the fact that in the special case that $X$ has \emph{no monodromy}, there is a natural strict \emph{characteristic morphism} into a sharp monoidal space $F_X$ that captures the combinatorics of the logarithmic strata of $X$, a so-called \emph{Kato fan}, as introduced in \cite{Kato_toricsing}. The tropicalization map $\trop_X$ is simply the "analytification" of this characteristic morphism in a suitable topological sense. In Section \ref{section_functroplogstack} below we rephrase this construction in a way that immediately generalizes to logarithmic stacks.

The advantage and the beauty of working with Kato fans mostly stems from their dual nature as algebraic objects, whose geometry is completely determined by combinatorics. Their usage, however, suffers from two major inconveniences:
\begin{enumerate}
\item\label{problemI} Not every fine and saturated logarithmic scheme admits a characteristic morphism into a Kato fan. In fact, as soon as the logarithmic structure on $X$ encodes \emph{monodromy}, a Kato fan does not exist (see \cite[ Example 4.8]{Ulirsch_functroplogsch}). 
\item\label{problemII} Kato fans live in the category of sharp monoidal spaces. So, despite their inherently algebraic nature, most of the techniques of algebraic geometry cannot directly be applied to Kato fans.% (see \cite{Gillam_fans} for the state of the art). 
\end{enumerate}

Both problems can be resolved by working with \emph{Artin fans}, a notion that has originally been introduced in \cite{AbramovichWise_invlogGromovWitten, AbramovichChenMarcusWise_boundedness} in the context of logarithmic Gromov-Witten theory, but can be implicitly traced back to the work of Olsson \cite{Olsson_loggeometryalgstacks} on classifying stacks of logarithmic structures. Similiarly to Kato fans, Artin fans can be described completely in terms of combinatorics, since every Artin fan is \'etale locally isomorphic to a stack quotient $\big[X\big/T\big]$, where $X$ is a $T$-toric variety. In fact, in \cite{CavalieriChanUlirschWise_tropstack}*{Theorem 3} (also see Theorem \ref{thm_Artinfan=Katostack} below) it is shown that the $2$-category of Artin fans is equivalent to the category of \emph{Kato stacks}, i.e. to a category of geometric stacks over the category of Kato fans (see Section \ref{section_Katostacks} below).

\subsection{Non-Archimedean geometry of Artin fans} In Section \ref{section_logstacktoArtinfan} below we will see how to naturally associate to a fine and saturated logarithmic stack $\calX$ that is locally of finite type over $k$ an Artin fan $\calA_\calX$ with faithful monodromy that parametrizes the logarithmic strata of $\calX$ (as well as a certain representation of their generic automorphisms) together with a tautological strict morphism $\phi_\calX\colon \calX\rightarrow \calA_\calX$ that is functorial with respect to strict morphisms. 

In Section \ref{section_functroplogstack} we generalize the construction from \cite{Ulirsch_functroplogsch} to a natural continuous tropicalization map $\trop_\calX\colon \vert \calX^\beth\vert \rightarrow \Sigmabar_\calX$. Expanding on the heuristics from \cite{Ulirsch_functroplogsch} that $\trop_\calX$ is the "analytification" of the characteristic morphism, it seems natural to expect that, after applying Thuillier's \cite{Thuillier_toroidal} generic fiber functor $(.)^\beth$ (see Section \ref{section_stackygenericfiber} below), the non-Archimedean analytic map $\phi_\calX^\beth\mathrel{\mathop:}\calX^\beth\rightarrow\calA_\calX^\beth$ should, at least topologically, agree with $\trop_\calX$. The following Theorem \ref{thm_trop=anal} makes our heuristic precise. 

\begin{theorem}\label{thm_trop=anal}
 Let $\calX$ be a fine and saturated logarithmic stack locally of finite type over $k$. There is a natural homeomorphism $\mu_\calX\mathrel{\mathop:}\big\vert\calA_\calX^\beth\big\vert\xlongrightarrow{\sim}\Sigmabar_\calX$ that makes the diagram
\begin{center}\begin{tikzcd}
 \big\vert\calX^\beth\big\vert \arrow[rrd,bend left,"\trop_\calX"] \arrow[rd,"\phi_\calX^\beth"']&&\\
 &\big\vert\calA_\calX^\beth\big\vert \arrow[r,"\sim","\mu_\calX"'] & \Sigmabar_\calX
\end{tikzcd}\end{center}
commute. 
\end{theorem}

Suppose now that $\calX$ is a logarithmically smooth Deligne-Mumford stack. By \cite[Theorem 1.2]{Ulirsch_functroplogsch} and \cite{Thuillier_toroidal} (also see \cite{AbramovichCaporasoPayne_tropicalmoduli}*{Section 5} and Proposition \ref{prop_functroplogstack} (iii) below) the tropicalization map $\trop_\calX$ admits a continuous section $J_\calX\mathrel{\mathop:}\Sigmabar_\calX\rightarrow \big\vert\calX^\beth\big\vert$ such that the composition $\bfp_\calX=J_\calX\circ\trop_\calX$ is a strong deformation retraction of $\big\vert\calX^\beth\big\vert$ onto the \emph{skeleton} $\frakS(\calX)$ associated to $\calX$. Therefore Theorem \ref{thm_trop=anal} immediately implies the following Corollary \ref{cor_skel=anal}.

\begin{corollary}\label{cor_skel=anal}
If $\calX$ is a logarithmically smooth Deligne-Mumford stack, there is a natural homeomorphism $\widetilde{\mu}_\calX\mathrel{\mathop:}\big\vert\calA_\calX^\beth\big\vert\xlongrightarrow{\sim}\frakS(\calX)$ that makes the diagram
\begin{center}\begin{tikzcd}
 \big\vert\calX^\beth\big\vert \arrow[drr,bend left,"\bfp_\calX"] \arrow[dr,"\phi_\calX^\beth"']&& \\
 &\big\vert\calA_\calX^\beth\big\vert \arrow[r,"\sim","\widetilde{\mu}_\calX"'] & \frakS(\calX) 
\end{tikzcd}\end{center}
commute. 
\end{corollary}

Theorem \ref{thm_trop=anal} and Corollary \ref{cor_skel=anal} in particular show that both the generalized extended complex and the skeleton of a logarithmic stack canonically carry the structure of an analytic stack. The author expects that this is only the starting point of a longer program and that one can endow the different kinds of non-Archimedean skeletons \cite{Berkovich_book, Berkovich_localcontractibility, BakerPayneRabinoff_nonArchtrop, GublerRabinoffWerner_skeletons&tropicalizations, MustataNicaise_KontsevichSoibelman} with the structure of an analytic stack and thereby categorifying these a priori only topological constructions within the realm of analytic geometry.  

Conversely, Theorem \ref{thm_trop=anal} also allows us to naturally endow the topological space $\big\vert \calA_\calX^\beth\big\vert$ with the structure of a generalized extended cone complex that correctly encodes the combinatorics of the logarithmic strata, a point of view that allows us to recover the main result of \cite{AbramovichCaporasoPayne_tropicalmoduli} on the tropical geometry of moduli spaces.

\subsection{Artin fans and moduli spaces}
In \cite{AbramovichCaporasoPayne_tropicalmoduli} the authors develop a framework to understand the tropical geometry of moduli spaces (with a focus on the moduli space of stable curves) that is based on Thuillier's non-Archimedean skeleton of a toroidal embedding \cite{Thuillier_toroidal}. Their recipe has already been applied in a multitude of other situations \cite{CavalieriMarkwigRanganathan_admissiblecovers, CavalieriHampeMarkwigRanganathan_tropHassett, Ulirsch_tropHassett, Ranganathan_ratcurvesontorvars}. We now revisit their result, equipped with the techniques of this paper and the theory of \emph{tropical moduli stacks} that the author and his collaborators have developed in \cite{CavalieriChanUlirschWise_tropstack}.

Let $2g-2+n>0$. The moduli stack $\calM_{g,n}^{trop}$ is the unique Kato stack, whose fiber over an affine Kato fan $\Spec P$ (for a sharp monoid $P$) is the groupoid of stable tropical curves of genus $g$ with $n$ marked legs and edge lengths in $P$. As explained in \cite{CavalieriChanUlirschWise_tropstack}*{Section 7}, the Artin fan associated to $\calM_{g,n}^{trop}$ is a stack $\calA_{\calM_{g,n}^{trop}}$ over the category of logarithmic schemes, whose fiber over a logarithmic scheme $S$ consists of collections $(\Gamma_s)$ of tropical curves in $\calM_{g,n}^{trop}\big(\Spec \Mbar_{S,s}\big)$ that are parametrized by the geometric points $s$ of $S$ and are compatible with respect to \'etale specialization. Moreover, there is a natural strict logarithmic \emph{tropicalization morphism} $\trop_{g,n}\colon \calM_{g,n}^{log}\rightarrow \calA_{\calM_{g,n}^{trop}}$ from the moduli stack of logarithmic curves (in the sense of \cite{Kato_logsmoothcurves}) that is given by associating to a logarithmic curve $X\rightarrow S$ the collection of dual tropical curves $(\Gamma_{X_s})$ for all geometric points $s$ of $S$. 

 The moduli stack $\calM_{g,n}^{trop}$ is a stack-theoretic generalization of the set-theoretic (coarse) tropical moduli space $M_{g,n}^{trop}$, a generalized cone complex whose points naturally parametrize the isomorphism classes of stable tropical curves of genus $g$ with $n$ marked legs. Its canonical extension $\Mbar_{g,n}^{trop}$ naturally parametrizes so-called \emph{extended tropical curves} which allow edges of infinite length. We refer the reader to \cite{HuszarMarcusUlirsch_troplogclutch&glue} for a stack-theoretic generalization of this moduli space as a functor over the category of  \emph{pointed monoids} that contain an absorbing element $\infty$.

In \cite{AbramovichCaporasoPayne_tropicalmoduli} the authors identify $\Mbar_{g,n}^{trop}$ with the non-Archimedean skeleton of $\calMbar_{g,n}$, thought of as a toroidal embedding with respect to the Deligne-Knudsen-Mumford boundary. Moreover, they show that the natural retraction map $\bfp_{g,n}$ onto the skeleton $\frakS\big(\calMbar_{g,n}\big)$ is exactly the tropicalization map of $\calMbar_{g,n}$ that associates to a stable degeneration of an algebraic curve the dual tropical curve of the special fiber. 

The generalized extended cone complex associated to $\calM_{g,n}^{trop}$ is nothing but the set-theoretic moduli space $\Mbar_{g,n}^{trop}$ of extended stable tropical curves considered in \cite{AbramovichCaporasoPayne_tropicalmoduli}, which, by Proposition \ref{prop_anal=conecomplex} (i.e. essentially by Theorem \ref{thm_trop=anal} above) is homeomorphic to $\big\vert \calA_{\calM_{g,n}^{trop}}^\beth\big\vert$ (also see \cite[Corollary 1]{CavalieriChanUlirschWise_tropstack}). 
The following Theorem \ref{thm_ACPArtin} rephrases the main result of \cite{AbramovichCaporasoPayne_tropicalmoduli} in the language developed in this article. 

\begin{theorem}\label{thm_ACPArtin}
The Artin fan of $\calA_{\calM_{g,n}^{trop}}$ (as a logarithmic stack) is naturally isomorphic to the Artin fan $\calA_{\calM_{g,n}^{log}}$ of $\calM_{g,n}^{log}$ and the tautological strict morphism 
\begin{equation*}
\phi_{g,n}\colon \calA_{\calM_{g,n}^{trop}}\longrightarrow \calA_{\calM_{g,n}^{log}}
\end{equation*}
induces an isomorphism $M_{g,n}^{trop}\xrightarrow{\sim} \Sigma_{\calM_{g,n}^{log}}$ of generalized cone complexes that makes the induced diagram
\begin{center}\begin{tikzcd}
\big\vert\calMbar_{g,n}^\beth\big\vert \arrow[dr,"\trop_{g,n}^\beth"'] \arrow[drr, bend left, "\phi_{\calM_{g,n}^{log}}^\beth"]&& \\
&\Mbar_{g,n}^{trop} \arrow[r,"\sim","\phi_{g,n}^\beth"'] & \Sigmabar_{\calM_{g,n}^{log}}
\end{tikzcd}\end{center}
commute.
\end{theorem}

Although the generalized extended cone complexes $\Mbar_{g,n}^{trop}$ and $\Sigmabar_{\calM_{g,n}^{log}}$ are isomorphic, the strict morphism $\phi_{g,n}\colon \calA_{\calM_{g,n}^{trop}}\rightarrow \calA_{\calM_{g,n}^{log}}$ is not an isomorphism of logarithmic stacks. In other words, the Artin fan associated to the Artin fan $\calA_{\calM_{g,n}^{trop}}$ (thought of as a logarithmic stack) is not $\calA_{\calM_{g,n}^{trop}}$ itself, but rather $\calA_{\calM_{g,n}^{log}}$, since $\calA_{\calM_{g,n}^{trop}}$ does not have faithful monodromy. 

In fact, the points of both $\calA_{\calM_{g,n}^{trop}}$ and $\calA_{\calM_{g,n}^{log}}$ are the same (and parametrized by the category of stable finite vertex-weighted graphs of genus $g$ with $n$ legs). The crucial difference is that the automorphism group of a point in $\calA_{\calM_{g,n}^{trop}}$ is $\Aut(G)$ while the automorphisms of its image in $\calA_{\calM_{g,n}^{log}}$ is the image of $\Aut(G)$ under the natural monodromy representation (see Section \ref{section_logstacktoArtinfan} below for details and, in particular, Example \ref{example_M11}).

%%%%%%%%%%%%%%%%%%%%%%%%%%%%%%%%%%%%%%%%%%%%%%%%%%%%%%

%%%%%%%%%%%%%%%%%%%%%%%%%%%%%%%%%%%%%%%%%%%%%%%%%%%%%%

%%%%%%%%%%%%%%%%%%%%%%%%%%%%%%%%%%%%%%%%%%%%%%%%%%%%%%

\subsection{Complements and Applications}

\subsubsection{The case of toric varieties} 
Let $T$ be a split algebraic torus with character lattice $M$ and cocharacter lattice $N$. Let $X=X(\Delta)$ be a $T$-toric variety defined by a rational polyhedral fan. In \cite{Kajiwara_troptoric} and \cite{Payne_anallimittrop} the authors independently construct a partial compactification $N_\R(\Delta)$ of $N_\R=N\otimes\R$ that allows them to define a natural continuous tropicalization map $\trop_\Delta\mathrel{\mathop:}X^{an}\rightarrow N_\R(\Delta)$. 

By \cite[Proposition 7.1]{Ulirsch_functroplogsch}, the restriction of the Kajiwara-Payne tropicalization map to $X^\beth\subseteq X^{an}$ is nothing but the tropicalization $\trop_\Delta\colon X^\beth\rightarrow \Deltabar$ defined in \cite{Ulirsch_functroplogsch}, where $\Deltabar$ denotes the closure of $\Delta$ in $N_\R(\Delta)$. Therefore there is a natural homeomorphism $\mu_X\mathrel{\mathop:}\big\vert\calA_X^\beth\big\vert\xlongrightarrow{\sim}\Deltabar$ that makes the diagram
\begin{center}\begin{tikzcd}
X^\beth \arrow[dr,"\phi_X^\beth"']\arrow[drr, bend left, "\trop_{\Delta}"]&&\\
& \calA_X \arrow[r,"\sim","\mu_X"'] & \Deltabar
\end{tikzcd}\end{center}
commute.

Since $\calA_X=\big[X\big/T\big]$ by Example \ref{example_toricArtinfan} below, this suggests a new perspective on the tropicalization of toric varieties as the non-Archimedean analytic stack quotient
\begin{equation*}
X^\beth\longrightarrow \big[X^\beth\big/T^\beth\big] \ .
\end{equation*}
With some technical modifications this result can be generalized to all toric varieties defined over any non-Archimedean field, not necessarily carrying the trivial absolute value (see \cite[Theorem 1.1]{Ulirsch_trop=quot}). As explained in \cite{Ulirsch_trop=quot}*{Section 1}, this adds a further layer to the analogy between the tropicalization map in the non-Archimedean and the moment map in the Archimedean world.

\subsubsection{Applications}
Artin fans have been introduced in \cite{AbramovichWise_invlogGromovWitten, AbramovichChenMarcusWise_boundedness} in order to study the toroidal birational geometry of the moduli space of logarithmically stable maps. The main point hereby is that they form a convenient and flexible framework to study toroidal modifications via subdivisions, expanding on K. Kato's construction via Kato fans in \cite[Section 10]{Kato_toricsing} as well as the original approach for toroidal embeddings in \cite[Section II.2]{KKMSD_toroidal}.

Theorem \ref{thm_trop=anal} and its toric variant in \cite{Ulirsch_trop=quot} have already found applications to tropical geometry. In \cite{Ranganathan_Artinfanrealizability} Ranganathan shows that, while not every tropical curve $\Gamma\subseteq \R^n$ is realizable by a curve in a $T\simeq\G^n$-toric variety $X$, it is possible to always find a curve over the Artin fan $\calA_X=\big[X\big/T\big]$ that tropicalizes to $\Gamma$. The main reason why this approach works can be traced back to the vanishing of certain cohomological logarithmic obstruction groups over $\calA_X$ (see \cite[Section 3]{CheungFantiniParkUlirsch_faithfulrealizability} for a detailed discussion). 
Furthermore, Artin fans in general and Theorem \ref{thm_trop=anal} in particular form a crucial technical ingredient in the following papers:
\begin{itemize}
\item in Ranganathan's \cite{Ranganathan_ratcurvesontorvars} conceptual explanation of the classical correspondence theorem between rational curves on toric varieties and their tropical counterparts \cite{Mikhalkin_enumerativetrop, NishinouSiebert_toricdeg&tropcurves} in terms of logarithmic Gromov-Witten theory and via the tropical geometry of moduli spaces \cite{AbramovichCaporasoPayne_tropicalmoduli};
\item in Ranganathan's \cite{Ranganathan_superabundantgeometries} study of superabundant geometries and the realizability of tropical curves;
\item and in Jensen and Ranganathan's \cite{JensenRanganathan_BrillNoetherfixedgonality} determination of the dimension of the Brill-Noether spaces of complete linear series for curves of fixed gonality, using methods from logarithmic Gromov-Witten theory. 
\end{itemize}

\subsubsection{Generalizations}

In \cite[Section 13]{Lorscheid_tropicalschemes} Lorscheid provides an enrichment of the structure of the Kato fan $F_X$ of a logarithmic scheme $X$ without monodromy within the category of \emph{ordered blue schemes}. The advantage of his approach is that now one can also endow the tropicalization of a closed subscheme of $X$ with an algebraic structure that is inspired by the theory of \emph{tropical schemes} \cite{GiansiracusaGiansiracusa_tropicalschemes, MaclaganRincon_tropicalideals, MaclaganRincon_tropicalschemes}. The author hopes that eventually there will be a general stack-theoretic object, an "ordered blue stack", which generalizes both ordered blue schemes and Kato stacks, and which categorifies the tropical geometry of subspaces of logarithmic stacks.

In another direction, it would of course be desirable to have a relative theory of Artin fans over arbitrary valuation rings of rank one. The idea in this case would be to have Artin fans that are \'etale locally isomorphic to stack quotients of toric schemes (see \cite[Section IV.3]{KKMSD_toroidal} and \cite[Section 6 and 7]{Gubler_guide}) by their big tori. We  refrain from including this here, since the necessary technical foundations for logarithmic structures over arbitrary valuation rings of rank one and their connections to tropical and non-Archimedean geometry seem to have not appeared in the literature.

%%%%%%%%%%%%%%%%%%%%%%%%%%%%%%%%%%%%%%%%%%%%%%%%%%%%%%

\subsection{Conventions}\label{section_conventions}

A \emph{monoid} is a commutive semigroup with a unit element and will be mostly written additively. Throughout this article, unless mentioned otherwise, we assume that all monoids are fine and saturated and implicitly amalgamated sums will be taken in the category of fine and saturated monoids. We say that a monoid $P$ is \emph{sharp} if its group of units $P^\ast$ is trivial.

Let $S\subseteq P$ be a submonoid. The \emph{localization} $S^{-1}P$ of $P$ along $S$ is the unique monoid that factors every monoid homomorphism $f\colon P\rightarrow Q$ for which every $f(s)$ for $s\in S$ is a unit in $Q$. We call the monoid $S^{-1}P/(S^{-1}P)^\ast$ the \emph{sharp localization} of $P$ along $S$; it is the unique monoid that uniquely factors every monoid homomorphism $f\colon P\rightarrow Q$ into a sharp monoid $Q$ that fulfills $f(s)=0$ for all $s\in S$. If $f\in P$ we write $P_f$ for the localization of $P$ along the submonoid generated by $f$ and $P_f/P_f^\ast$ for its sharp localization.

A \emph{sharp monoidal space} is a tuple $(X,\calO_X)$ consisting of a topological space $X$ and a sheaf of monoids $\calO_X$ such that for every point $x\in X$ the unit group $\calO_{X,x}^{\ast}$ is trivial. A morphism $(X,\calO_X)\rightarrow (Y,\calO_Y)$ of sharp monoidal spaces consists of a continuous map $f\colon X\rightarrow Y$ together with a pullback homomorphism $f^\flat\colon f^{-1}\calO_Y\rightarrow \calO_X$ such that for every $x\in X$ the homomorphism $f^\flat_x\colon \calO_{Y.f(y)}\rightarrow\calO_{X,x}$ is a \emph{local}, i.e. we have $(f^\flat)^{-1}(0_{X,x})=\{0_{Y,f(x)}\}$. In a slight but familiar abuse of notation we simply denote the tuple $(f,f^\flat)$ by the letter $f$. 

The letter $k$ will always stand for an algebraically closed field $k$ that is endowed with the trivial absolute value and the trivial logarithmic structure $k^\ast\subseteq k$. We are working in the category $\mathbf{LSch}$ of fine and saturated logarithmic schemes $X=(\underline{X},M_X)$ in the sense of \cite{Kato_logstr}, which are locally of finite type over $k$, unless mentioned otherwise. In particular, we shall simply refer to a fine and saturated logarithmic scheme that is locally of finite type over $k$ as a \emph{logarithmic scheme}. 

Recall that a morphism $f\mathrel{\mathop:}X\rightarrow X'$ of logarithmic schemes is said to be \emph{strict}, if the canonical morphism $f^\ast M_{X'}\rightarrow M_X$ is an isomorphism. The category $\mathbf{LSch}$ is endowed with the  \emph{strict \'etale topology}, the coarsest topology that makes the functor $X\mapsto \underline{X}$ continuous with respect to the \'etale topology.

The term \emph{logarithmic stack} will always refer to a fine and saturated logarithmic stack that is locally of finite type over $k$, i.e. an algebraic stack $\underline{\calX}$ locally of finite type over $k$ that is endowed with a fine and saturated logarithmic structure $M_\calX$ in the lisse-\'etale topology. To every logarithmic stack $\calX$ we may associate a geometric stack over the $\mathbf{LSch}_{str.\'et.}$, which (in a slight abuse of notation) we are also going to denote by $\calX$ (see \cite[Section 4]{Abramovichetal_logmoduli} for a short discussion of this issue).

Fix a logarithmic scheme $S$. In  \cite{Olsson_loggeometryalgstacks} Olsson introduces an algebraic stack  $\underline{\LOG}_S$, whose objects are morphisms $T\rightarrow S$ of logarithmic schemes and whose arrows are strict morphisms over $S$, the \emph{classifying stack of logarithmic structures over $S$}. Given a logarithmic scheme $X$ (or more generally a logarithmic stack) over $S$, there is a natural \emph{tautological morphism} $\underline{X}\rightarrow \underline{\LOG}_S$ determined by associating to $f\in \underline{X}(T)$ the logarithmic scheme $(T,f^\ast M_X)$ over $S$. Conversely, let $\underline{X}$ be a scheme over $\underline{S}$. In this situation, the data of a logarithmic structure on $\underline{X}$ over $S$ is equivalent to the data of the tautological morphism $\underline{X}\rightarrow \underline{\LOG}_S$. It has been shown in \cite[Section 4]{Olsson_loggeometryalgstacks} that a logarithmic morphism $X\rightarrow S$ is logarithmically \'etale (or logarithmically smooth) if and only if the morphism $\underline{X}\rightarrow \underline{\LOG}_S$ is \'etale (or smooth respectively). 

Let $S$ be a scheme (or more generally an algebraic stack). A \emph{geometric point} $s$ of $S$ is a point in the small \'etale site of $S$. So $s$ is an equivalence class of morphism $\Spec K\rightarrow S$, where $K$ is an algebraically closed field and the equivalence relation is generated by identifying $\Spec K\rightarrow S$ and $\Spec L\rightarrow S$, if $L\vert K$ is a field extension. The field extension $L\vert K$ induces an equivalence $(\Spec L)_{et}\simeq(\Spec K)_{et}$ and both are equivalent to $\mathbf{Sets}$. So we may think of a geometric point $s$ as a functor $s^\ast\colon S_{et}\rightarrow\mathbf{Sets}$. An \emph{\'etale specialization} $t\leadsto s$ between geometric points is a natural transformation $s^\ast\rightarrow t^\ast$, i.e. a choice of a point in $t^\ast U$ for every \'etale neighborhood $U$ of $s$ (see \cite{CavalieriChanUlirschWise_tropstack}*{Appendix A} for details). 

\subsection{Acknowledgements}
First and foremost the author would like to thank Dan Abramovich for sharing a multitude of insights in discussions around the topic of this article as well as Jonathan Wise for teaching him about Artin fans during many conversations. 
The author also profited from many conversations with other mathematicians, in particular with Matt Baker, Dori Bejleri, Renzo Cavalieri, Melody Chan, Andreas Gross, Oliver Lorscheid,  Diane MacLagan, Steffen Marcus, Sam Payne, Dhruv Ranganathan, Mattia Talpo, Michael Temkin, Jenia Tevelev, Amaury Thuillier, Annette Werner, and Tony Yue Yu. Many thanks are also due to the anonymous referee(s), who pointed out several technical inaccuracies in an earlier version of this manuscript and gave a multitude of suggestions that significantly improved the exposition in this article. 

%%%%%%%%%%%%%%%%%%%%%%%%%%%%%%%%%%%%%%%%%%%%%%%%%%%%%%

\section{Kato stacks}\label{section_Katostacks}

%%%%%%%%%%%%%%%%%%%%%%%%%%%%%%%%%%%%%%%%%%%%%%%%%%%%%%

In this section we introduce the theory of \emph{Kato stacks} in order to generalize the older notion of \emph{generalized (rational polyhedral) cone complexes} into a natural $2$-categorical setting (see \cite{AbramovichCaporasoPayne_tropicalmoduli}*{Section 2.6} as well as \cite{Ulirsch_functroplogsch}*{Section 3.5}).

\subsection{Kato fans -- a reminder} Denote the dual category of the category of sharp monoids by $\mathbf{Aff}$. We refer to an object  of this category as an \emph{affine Kato fan}. As proposed in \cite{Kato_toricsing} (also see \cite[Section 3.1]{Ulirsch_functroplogsch}) one can visualize the affine Kato fan corresponding to a sharp monoid $P$ as a sharp monoidal space $\Spec P$, the \emph{spectrum of $P$:} 
\begin{itemize}
\item As a set $\Spec P$ is the set of prime ideals of $P$, i.e. the set of subsets $\frakp$ of $P$ such that $ p+ \frakp\subseteq \frakp$ for all $p\in P$ and the complement $P-\frakp$ is a monoid;
\item the topology on $\Spec P$ is generated by the basic open subsets $D(f)=\{\frakp\vert f\notin\frakp\}$ for $f\in P$; and
\item the structure sheaf is defined by associating to $D(f)$ the sharp localization $\overline{P}_f=P_f/P_f^\ast$ of $P$ along $f$. 
\end{itemize}

\begin{definition}
A \emph{Kato fan} $F$ is a sharp monoidal space that admits a covering $U_i$ by open subsets that are isomorphic to affine Kato fans $\Spec P_i$. 
\end{definition}

A \emph{morphism} of Kato fans is a morphism of sharp monoidal spaces. Throughout this article we assume that every Kato fan is \emph{locally fine and saturated (fs)}, i.e. the monoids $P_i$ in the covering of $F$ above can be chosen to be fine and saturated. We write $\mathbf{Fan}$ for the category of (locally fine and saturated) Kato fans and refer the reader to \cite{GillamMolcho_manifoldswithcorners} and \cite{Gillam_fans} for the state of the art in the theory of Kato fans as well as to \cite[Section 3]{Ulirsch_functroplogsch} for many explicit examples. 

The category $\mathbf{Fan}$ carries a Grothendieck topology, the \emph{big Zariski topology}; its covers are generated by open subsets on $\mathbf{Aff}$ coming from sharp localizations $P\rightarrow \overline{P}_f$ along submonoids generated by an element $f\in P$. 

Every Kato fan defines a presheaf
\begin{equation*}\begin{split}
h_F\mathrel{\mathop: } \mathbf{Fan}^{op}&\longrightarrow \mathbf{Sets}\\
G&\longmapsto F(G)=\Hom(G, F)
\end{split}\end{equation*}
that is a sheaf on $(\mathbf{Fan},\tau_{Zar})$ by the Zariski local definition of $F$ and since every cover of an affine Kato fan already has to contain an isomorphism. By the Yoneda Lemma the association $F\mapsto h_F$ defines a fully faithful functor from the category of Kato fans to the category of sheaves on $\mathbf{Fan}$. In the following we are going to identify a Kato fan $F$ with its functor of points $h_F$ and simply write $F$ for both the locally monoidal space and the functor.

\begin{proposition}\label{prop_fiberproductsKatofans}
The category of locally fine and saturated Kato fans admits fiber products.
\end{proposition}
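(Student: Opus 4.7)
The plan is to imitate the classical construction of fiber products of schemes, treating the affine case by dualising pushouts of sharp fs monoids and then gluing along open immersions. Given sharp fine and saturated monoids $P$, $Q$, $R$ with homomorphisms $\alpha\mathrel{\mathop:}R\to P$ and $\beta\mathrel{\mathop:}R\to Q$, I first form the pushout $P\oplus_R Q$ in fs monoids (as in the paper's convention) and then take its sharpification $N\mathrel{\mathop:}=(P\oplus_R Q)\big/(P\oplus_R Q)^\ast$. Since sharpification is left adjoint to the inclusion of sharp fs monoids into fs monoids, $N$ is the pushout of $P$ and $Q$ over $R$ in sharp fs monoids. A morphism $\Spec S\to\Spec N$ of affine Kato fans is the same datum as a sharp monoid homomorphism $N\to S$, which by the universal property of $N$ is the same as a pair of sharp monoid homomorphisms $P\to S$ and $Q\to S$ that agree after restriction along $\alpha$ and $\beta$. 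Hence $\Spec N$ represents the fiber product $\Spec P\times_{\Spec R}\Spec Q$ in the category of locally fs Kato fans.

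For general Kato fans $F_1,F_2$ over $F$, I would pick an affine open cover $\{U_\gamma=\Spec R_\gamma\}$ of $F$ together with affine open refinements $\{V_{\gamma,i}=\Spec P_{\gamma,i}\}$ and $\{W_{\gamma,j}=\Spec Q_{\gamma,j}\}$ of the preimages in $F_1$ and $F_2$ respectively. Applying the affine case produces local fiber products $X_{\gamma,i,j}\mathrel{\mathop:}=V_{\gamma,i}\times_{U_\gamma}W_{\gamma,j}$, and the candidate $F_1\times_F F_2$ will be obtained by gluing the $X_{\gamma,i,j}$ along their pairwise intersections, exactly as in the scheme-theoretic construction.

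The main obstacle, and the only point genuinely requiring verification, is the compatibility of affine fiber products with open immersions; this is what guarantees that the gluing data on the overlaps come from affine opens sitting inside each $X_{\gamma,i,j}$. Concretely, for a basic open $D(f)\subseteq\Spec R$ and a morphism $\Spec P\to\Spec R$ corresponding to a homomorphism $\alpha\mathrel{\mathop:}R\to P$, one has to check that
\begin{equation*}
D(f)\times_{\Spec R}\Spec P \;\cong\; D\bigl(\alpha(f)\bigr)\subseteq\Spec P,
\end{equation*}
or equivalently that the sharpified pushout of $R_f/R_f^\ast$ and $P$ over $R$ coincides with the sharp localization $P_{\alpha(f)}/P_{\alpha(f)}^\ast$. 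Granting this compatibility, the standard cocycle check glues the $X_{\gamma,i,j}$ to a locally fs Kato fan whose universal property follows at once from that of its affine charts, and independence from the choices of covers is automatic from the universal property. This mirrors the analogous discussion for Kato fans without the fs hypothesis in \cite{Gillam_fans}.
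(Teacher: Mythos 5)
Your proof is correct and follows essentially the same route as the paper: in the affine case, take the sharpification of the amalgamated sum in fs monoids, and in the general case, glue. The paper dispatches the glueing step with a single sentence, whereas you are more explicit, correctly identifying that the one thing genuinely requiring verification is compatibility of the affine construction with basic open immersions $D(f)\subseteq\Spec R$; conversely, the paper checks the universal property against arbitrary sharp monoidal spaces via the adjunction $\Hom(X,\Spec M)=\Hom(M,\calO_X(X))$, which is a slightly more economical way to phrase the affine step than your restriction to test objects of the form $\Spec S$, but both amount to the same Yoneda-style argument.
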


\begin{proof}
Given a diagram 
\begin{equation*}\begin{CD}
 @.G\\
 @. @VVV\\
F@>>>H
\end{CD}\vspace{0.5em}
\end{equation*}
of Kato fans, we have to construct a Kato fan $F\times_HG$ together with morphism to $F$ and $G$ that makes the above diagram cartesian in the category of Kato fans. If $F=\Spec P$, $G=\Spec Q$, and $H=\Spec S$ are all affine, we can set $F\times_HG=\Spec \overline{P\oplus_SQ}$, where $P\oplus_SQ$ is the amalgamated sum in the category of fine and saturated monoids. This is a fiber product, since for every sharp monoidal space $X$ we have: 
\begin{equation*}\begin{split}
\Hom\big(X,F\times_HG\big)&=\Hom\big(P\oplus_SQ,\calO_X(X)\big)\\
&=\Hom\big(P,\calO_X(X)\big)\times_{\Hom(S,\calO_X(X))}\Hom\big(Q,\calO_X(X)\big) \ .
\end{split}\end{equation*}
The general case now follows by glueing. 
\end{proof}

\begin{definition}
A morphism $f\mathrel{\mathop:}F\rightarrow G$ of locally fine and saturated Kato fans is said to be \emph{strict}, if the induced map $f^{-1}\calO_G\rightarrow \calO_F$  is an isomorphism.
\end{definition}

\begin{proposition}\label{prop_strictmors}
\begin{enumerate}[(i)]
\item A morphism $f\mathrel{\mathop:}F\rightarrow G$ between two Kato fans is strict if and only if it is a local isomorphism. 
\item The class of strict morphisms is stable under composition and base change.
\end{enumerate}
\end{proposition}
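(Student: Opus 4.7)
The plan for (i) is to reduce to the affine case and then exploit the fact that in a fine Kato fan every stalk is realized by an affine open subfan. Both conditions can be checked on an affine open cover of $F$ and $G$, so one may assume $F=\Spec P$ and $G=\Spec Q$ with $f$ induced by a homomorphism $\alpha\mathrel{\mathop:}Q\to P$ of sharp fs monoids. The direction ``local isomorphism implies strict'' is automatic, since the condition that $f^\ast\calO_G\to\calO_F$ be an isomorphism may be checked stalkwise.

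For the converse, fix $\frakp\in\Spec P$ and set $\frakq=\alpha^{-1}(\frakp)$; strictness then supplies an isomorphism of stalks $\overline{Q_\frakq}\xrightarrow{\sim}\overline{P_\frakp}$. The crux is the observation that since $P$ is fine the face $P\setminus\frakp$ is finitely generated, say by $g_1,\dots,g_n$, and the element $g=g_1+\cdots+g_n$ cuts out a basic open $D(g)\subseteq\Spec P$ which coincides with the specialization locus $\{\frakp'\in\Spec P\mathrel{\mathop:}\frakp'\subseteq\frakp\}$ and is canonically isomorphic to $\Spec\overline{P_\frakp}$ as a Kato fan. The analogous construction in $\Spec Q$ produces an affine open neighborhood of $\frakq$, and $f$ sends the former into the latter because $\alpha^{-1}$ preserves containment of primes. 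Combined with the stalk isomorphism, this exhibits the restriction of $f$ as an isomorphism of affine open neighborhoods, so $f$ is a local isomorphism near $\frakp$. The main technical obstacle will be the identification $D(g)=\Spec\overline{P_\frakp}$, which rests on the fact that faces in an fs monoid are closed under summands, so that $g_1+\cdots+g_n$ lies outside a prime $\frakp'$ if and only if each $g_i$ does; this is the step where fineness of $P$ is genuinely needed.

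For (ii), stability under composition is essentially formal via the identification $(g\circ f)^\ast\calO_H=f^\ast g^\ast\calO_H$ and the fact that a composition of isomorphisms of sheaves is an isomorphism. For stability under base change, the plan is to invoke the explicit fiber product formula from the preceding proposition: affine-locally, $F\times_HG=\Spec\overline{P\oplus_SQ}$, and the base change of a strict morphism $\alpha\mathrel{\mathop:}S\to P$ is the coprojection $Q\to\overline{P\oplus_SQ}$. By part (i), I may assume $\alpha$ is locally an isomorphism of monoids, in which case $P\oplus_SQ$ reduces locally to $Q$, so the base change is a local isomorphism and in particular strict. Gluing over an affine cover of the base then completes (ii).
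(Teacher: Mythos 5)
Your argument is correct and follows the same approach as the paper: reduce to affines, use the stalk isomorphism at a point, and promote it to an isomorphism on an affine neighborhood, then deduce (ii) formally from (i). What you add is the explicit observation, left implicit in the paper's one-line reduction, that for $P$ fine the face $P\setminus\frakp$ is finitely generated, so $g=g_1+\cdots+g_n$ cuts out an affine open $D(g)\cong\Spec\overline{P_\frakp}$ in which $\frakp$ is the unique closed point; this is exactly what turns a stalk isomorphism into a local isomorphism. One small slip in your exposition: the step where fineness is genuinely needed is the finite generation of the face $P\setminus\frakp$, not the closure of faces under summands, which holds for any integral monoid by the very definition of a face.
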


\begin{proof}
Let $f\colon F\rightarrow G$ be a morphism of Kato fans. For $x\in F$ denote by $U_x$ the smallest open subset in $F$ containing $x$ and by $V_{f(x)}$ the smallest open subset in $G$ containing $f(x)$. Notice that in this case, we have that $U_x=\Spec P_x$ and $V_{f(x)}=\Spec Q_{f(x)}$ for unique sharp fine and saturated monoids $P_x$ and $Q_{f(x)}$, the local monoids at $x$ and $f(x)$, and that both $x$ and $f(x)$ are the unique closed points of $\Spec P_x$ and $\Spec Q_{f(x)}$ respectively. The morphism $f$ is strict if and only if the induced maps $P_x\rightarrow Q_{f(x)}$ are an isomorphism for all $x\in F$, which is the case if and only if the induced maps $\Spec P_x\rightarrow\Spec Q_{f(x)}$ are isomorphisms. This shows that $f$ is a local isomorphism. Conversely, if $f$ is a local isomorphism, then the induced maps $U_x\rightarrow V_{f(x)}$ are isomorphisms, since both $U_x$ and $V_{f(x)}$ are the smallest open subsets containing $x$ and $f(x)$ respectively. This shows (i), which, in turn, immediately implies (ii). 
\end{proof}

The category $\mathbf{Fan}$ can be endowed with a further Grothendieck topology, the \emph{strict topology}, whose coverings are generated by strict morphisms. By Proposition \ref{prop_strictmors} (ii) the strict topology is, in fact, equivalent to the big Zariski topology on $\mathbf{Fan}$.

%%%%%%%%%%%%%%%%%%%%%%%%%%%%%%%%%%%%%%%%%%%%%%%%%%%%%%

\subsection{Kato stacks}
\begin{definition}
A \emph{Kato stack} is a stack $\calF$ over the site $(\mathbf{Fan},\tau_{Zar})$ fulfilling the following two properties:
\begin{enumerate}[(i)]
\item The diagonal morphism $\calF\rightarrow \calF\times\calF$ is representable by Kato fans.
\item There is a morphism $U\rightarrow \calF$ from a Kato fan $U$ to $\calF$ that is strict and surjective. 
\end{enumerate}
\end{definition}

Note hereby that $U\rightarrow \calF$ is representable, since the diagonal of $\calF$ is representable. So $U\rightarrow \calF$ being strict and surjective means that all of its base changes along a morphism to $\calF$ from another Kato fan are strict and surjective. We are going to refer to the strict and surjective morphism $U\rightarrow \calF$ and, in a slight abuse of notation, to $U$ itself as an \emph{atlas} of $\calF$. 

The category of Kato stacks (denoted by $\mathbf{K.St.}$) is the full subcategory of categories fibered in groupoids over $\mathbf{Fan}$, whose objects are Kato stacks. Note hereby that a category fibered in groupoids over $\mathbf{Aff}$ uniquely extends to a stack over $(\mathbf{Fan},\tau_{Zar})$, since every cover of an affine Kato fan always has to contain an isomorphism (see \cite[Proposition 2.3]{CavalieriChanUlirschWise_tropstack} for an analogous statement over the category of rational polyhedral cones). Throughout this article we are again going to assume that all of our Kato stacks are locally fine and saturated, i.e. that there is an atlas $U\rightarrow\calF$ such that $U$ is locally fine and saturated.

By \cite{Ulirsch_functroplogsch}*{Proposition 3.7} the category of Kato fans is equivalent to the category of rational polyhedral cone complexes (in the sense of \cite{KKMSD_toroidal}*{Def. 2.1.5}). As observed in \cite{CavalieriChanUlirschWise_tropstack}*{Remark 5.6}, this equivalence extends to an equivalence of geometric contexts and therefore the $2$-category of \emph{cone stacks} in the sense of \cite{CavalieriChanUlirschWise_tropstack}*{Definition 2.7} is equivalent to the $2$-category of Kato stacks, as defined in this section. So we can immediately translate between the language of Kato stacks and the language of cone stack as developed in \cite{CavalieriChanUlirschWise_tropstack}.

\begin{example}
 Since the presheaf $h_F$ is a sheaf, every Kato fan $F$ defines a Kato stack, which, in a slight abuse of notation, we are simply going to denote by $F$ as well. 
\end{example}

\begin{proposition}\label{proposition_groupoidquot=Katostack}
Let $(R\rightrightarrows U)$ be a strict surjective groupoid object in the category of Kato fans. Then the quotient stack $\big[U\big/R\big]$ is a Kato stack. 
\end{proposition}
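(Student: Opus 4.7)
The plan is to verify the two defining axioms of a Kato stack for $\calF = [U/R]$: representability of the diagonal and existence of a strict surjective atlas. The natural candidate atlas is the projection $\pi\colon U \to [U/R]$.

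The first step is to make $[U/R]$ concrete on points. By Lemma \ref{lemma_Zar=chaotic}, the strict topology on $\mathbf{Aff}$ coincides with the chaotic topology, so — as noted in the discussion following the definition of a Kato stack — every category fibered in groupoids is automatically a stack and no stackification is required. The prequotient of $R \rightrightarrows U$ is therefore already $[U/R]$, and for a Kato fan $T$ the groupoid $[U/R](T)$ admits the concrete description: objects are elements of $U(T)$, and a morphism $a \to b$ is an element $r \in R(T)$ with $s \circ r = a$ and $t \circ r = b$. This bypasses all torsor-theoretic considerations.

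For the diagonal: a map $T \to [U/R] \times [U/R]$ from a Kato fan $T$ is the same as a pair $(a,b) \in U(T) \times U(T)$, and the isomorphism sheaf $\Isom_{[U/R]}(a,b)$ — which represents the relevant fiber product along the diagonal — is, by the explicit description above, precisely the fiber product
\[
T \times_{(a,b),\, U \times U,\, (s,t)} R
\]
in the category of Kato fans, which exists by the preceding proposition. For the atlas: for any $a\colon T \to [U/R]$ the fiber product $T \times_{[U/R]} U$ equals $T \times_{a, U, s} R$, a Kato fan, so $\pi$ is representable. Taking $T = U$ and $a = \id_U$ identifies $U \times_{[U/R]} U$ with $R$, with the two projections given by $s$ and $t$. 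Since $s$ and $t$ are strict by hypothesis and strictness of a representable morphism of stacks can be checked after base change to an atlas (here $U$ itself), $\pi$ is strict. Surjectivity of $\pi$ is tautological from the explicit description of $[U/R](T)$, since every object there lifts by definition to an element of $U(T)$.

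The only non-routine conceptual point is the systematic invocation of the chaotic topology, which avoids torsors, stackification, and descent entirely and reduces the problem to manipulations of fiber products of Kato fans and the strictness hypothesis on $s, t$. Everything else is standard groupoid bookkeeping.
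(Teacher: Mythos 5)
Your proof is correct and follows essentially the same route as the paper: use the chaotic topology to bypass stackification, verify representability of the diagonal via the cartesian square $\Isom(a,b)\simeq T\times_{(a,b),\,U\times U,\,(s,t)}R$, and identify $T\times_{[U/R]}U$ with $T\times_{a,U,s}R$ so that the atlas projection is a base change of $s$, hence strict and surjective. The one place to tighten the wording is the strictness step: rather than saying strictness "can be checked after base change to an atlas (here $U$ itself)" — which reads as circular, since $U\to[U/R]$ is the atlas under construction — just observe directly, as you already computed, that for every $a\colon T\to[U/R]$ the projection $T\times_{a,U,s}R\to T$ is a base change of $s$ and apply Proposition \ref{prop_strictmors}(ii).
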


The proof of Proposition \ref{proposition_groupoidquot=Katostack} proceeds in complete analogy with the algebraic case (see \cite[Tag 04TJ]{stacks-project}); due to the triviality of descent with respect to the chaotic topology, we may, however, avoid some technicalities.

\begin{proof}[Proof of Proposition \ref{proposition_groupoidquot=Katostack}]
The groupoid $(R\rightrightarrows U)$ defines a functor
\begin{equation*}
\mathbf{Aff}^{op}\longrightarrow \mathbf{Groupoids}
\end{equation*}
by associating to $T\in\mathbf{Aff}$ the groupoid $\big(R(T)\rightrightarrows U(T)\big)$ and the associated category $\big[U\big/R\big]$ fibered in groupoids uniquely extends to a stack over $\mathbf{Fan}$.

In order to show that the diagonal of $\big[U\big/R\big]$ is representable by Kato fans we only have to show that for a Kato fan $T$ and two objects $x,y\in\big[U\big/R\big](T)$ the sheaf $\Isom\big(x\vert_T,y\vert_T\big)$ is representable by a Kato fan. But this follows, since we have a natural cartesian diagram
\begin{equation*}\begin{CD}
\Isom\big(x\vert_T,y\vert_T\big)@>>> R\\
@VVV @VVV\\
T@>(x\vert_T,y\vert_T)>> U\times U \ .
\end{CD}\end{equation*}

The natural morphism $R\rightarrow U\times_{[U/R]} U$ is an equivalence and therefore, for every $x\in \big[U\big/R\big](T)$, there are natural equivalences
\begin{equation*}
U\times_{[U/R]} T\simeq (U\times_{[U/R]} U)\times_{s,U,x} T\simeq R\times_{s,U,x}T \ . 
\end{equation*}
By assumption the projection morphism $R\times_U T\rightarrow T$ is surjective and strict as a base change of $s\mathrel{\mathop:}R\rightarrow U$ and thus the natural map $U\rightarrow \big[U\big/R\big]$ is a strict atlas of $\big[U\big/R\big]$.
\end{proof}

A \emph{presentation} of a Kato stack $\calF$ consists of a groupoid $(R\rightrightarrows U)$ in $\mathbf{Fan}$ together with an equivalence $\big[U\big/R\big]\simeq \calF$. Let $U\rightarrow \calF$ be a strict atlas of a Kato stack $\calF$. The fiber product $U\times_\calF U$ is representable by a Kato fan $R$ and the projections $\pr_0,\pr_1\mathrel{\mathop:}R\rightrightarrows U$, together with the natural composition morphism 
\begin{equation*}
R\times_U R\simeq U\times_\calF U\times_\calF U \xlongrightarrow{\ \ \ \ \pr_{02}\ \ \ } U\times_\calF U\simeq R \ ,
\end{equation*}
define a groupoid in the category of Kato fans. Its quotient stack $\big[U\big/R\big]$ is equivalent to the Kato stack $\calF$ we started with. We refer the reader to \cite[Tag 04T3]{stacks-project} for an analogous construction in the category of schemes.

\subsection{Combinatorial Kato stacks}

Although not immediately apparent, Kato stacks are fundamentally combinatorial objects, very much similar (although not equal) to generalized cone complexes (in the sense of \cite{AbramovichCaporasoPayne_tropicalmoduli}). We finish with a construction (in analogy with  \cite{CavalieriChanUlirschWise_tropstack}*{Section 2.2}) that makes the combinatorial nature of Kato stacks more apparent.  

\begin{definition}
Denote by $\mathbf{Aff}^{st}$ the category of affine Kato fans with strict morphisms as maps. A \emph{combinatorial Kato stack} is a category fibered in groupoids over $\mathbf{Aff}^{st}$.
\end{definition}

One should think of a combinatorial Kato stack as a diagram in $\mathbf{Aff}$ consisting of face morphisms that fulfills the axioms of a category fibered in groupoids. We refer the reader to \cite[Section 2.2]{CavalieriChanUlirschWise_tropstack} for a more careful explanation of this point of view and several illuminating examples.

\begin{proposition}\label{prop_combinatorialKatostacks}
The $2$-category of combinatorial Kato stacks is equivalent to the $2$-category of Kato stacks.
\end{proposition}

Proposition \ref{prop_combinatorialKatostacks} is a translation of \cite{CavalieriChanUlirschWise_tropstack}*{Proposition 2.18} to the realm of Kato fans. Its proof immediately transfers to this situation and is left to the avid reader. Given a Kato stack $\calF$, the associated combinatorial Kato stack $\calF^{comb}$ is given by the category of strict morphisms $\Spec P\rightarrow\calF$ from affine Kato fans into $\calF$. Conversely, we obtain the Kato stack $\calF$ associated to a combinatorial Kato stack $\calF^{comb}$ as the functor
\begin{equation*} 
\Spec P\longmapsto\HOM\big((\Spec P)^{comb},\calF^{comb}\big) 
\end{equation*} 
that sends $\Spec P$ to the groupoid of morphisms $(\Spec P)^{comb}\rightarrow \calF^{comb}$ of categories fibered in groupoids over $\mathbf{Aff}^{st}$. 

\subsection{From Kato stacks to generalized cone complexes}
 We have introduced the theory of Kato stacks to generalize the older (and inherently $1$-categorical) notion of a \emph{generalized (rational polyhedral) cone complex} into a natural $2$-categorical setting. Recall from \cite{AbramovichCaporasoPayne_tropicalmoduli}*{Section 2.6} (also see \cite{Ulirsch_functroplogsch}*{Section 3.5}):
 
 \begin{definition} A \emph{generalized cone complex} $\Sigma$ is a topological space $\vert \Sigma\vert$ that is given as a topological colimit of a diagram of (sharp) rational polyhedral cones $\sigma_\alpha$ with (not necessarily proper) face maps between them.
 \end{definition} 
 
 A \emph{morphism} $f\colon\Sigma\rightarrow \Sigma'$ of generalized cone complexes is given by a continuous map $\vert f\vert \colon \vert \Sigma\vert \rightarrow \vert\Sigma'\vert$ such that for every cone $\sigma\rightarrow \Sigma$ in $\Sigma$ there exists a cone $\sigma'\rightarrow \Sigma'$ in $\Sigma'$ such that $\vert f\vert$ factors through a $\Z$-linear map $\sigma\rightarrow\sigma'$ of rational polyhedral cones.

Let $P$ be a fine and saturated monoid. We write $\sigma_P$ for the rational polyhedral cone $\Hom(P,\R_{\geq 0})$ associated to $P$. The association $P\mapsto \sigma_P$ defines a natural functor from the category of fine and saturated monoids to the category of rational polyhedral cones.   

\begin{proposition}
There is a natural functor 
\begin{equation*}\begin{split}
\Sigma_{(.)}\colon\mathbf{K.St.}&\longrightarrow \mathbf{GCC}\\
\calF&\longmapsto \Sigma_\calF
\end{split}\end{equation*}
from the $2$-category of Kato stacks to the $1$-category of generalized cone complexes that sends an affine Kato fan $\Spec P$ to the cone $\sigma_P$ and preserves colimits.
\end{proposition}

\begin{proof} Given a Kato stack $\calF$, the cones defining $\Sigma_\calF$ are exactly the cones $\sigma_\xi=\sigma_P$ associated to strict morphisms $\xi\colon \Spec P\rightarrow \calF$ with face maps over $\Sigma_\calF$ induced by strict morphisms over $\calF$. In other words, the generalized cone complex $\Sigma_\calF$ is obtained by applying the functor $\sigma_{(.)}$ to the the combinatorial Kato stack $\calF^{comb}$, thought of as a diagram in $\mathbf{Aff}^{st}$, and taking colimits in the category of topological spaces. The association $\calF\mapsto\Sigma_\calF$ is clearly functorial. 
\end{proof}

%%%%%%%%%%%%%%%%%%%%%%%%%%%%%%%%%%%%%%%%%%%%%%%%%%%%%%

\section{Artin fans}\label{section_Artinfans}

Let $P$ be a monoid and write $U_P$ for the affine toric variety $\Spec k[P]$ with big torus $T=\Spec k\big[P^{gp}\big]$. We refer to the quotient stack $\calA_P=\big[U_P\big/T\big]$ as an \emph{Artin cone} (see Figure \ref{figure_Artincone} for a visualization). Recall the following Definition \ref{definition_Artinfans} from \cite{AbramovichChenMarcusWise_boundedness}*{Section 3.1}.

\begin{figure}[h]\begin{tikzpicture}
\fill (1,1) circle (0.20 cm)
      (3,1) circle (0.12 cm)
      (1,3) circle (0.12 cm)
      (3,3) circle (0.05 cm);
\draw [->] (1.5,1) -- (2.6,1);
\draw [->] (1.4,3) -- (2.7,3);
\draw [->] (1,1.5) -- (1,2.6);
\draw [->] (3,1.4) -- (3,2.7);

\node at (0.5,0.5) {$1$};
\node at (3.5,0.5) {$\G_,$};
\node at (0.5,3.5) {$\G_m$};
\node at (3.5,3.5) {$\G_m^2$};
\end{tikzpicture}\caption{The Artin cone $\calA_{\N^2}$. The arrows indicate specialization and the labels indicate the automorphism groups of each point.}\label{figure_Artincone}\end{figure}

\begin{definition}\label{definition_Artinfans}
An \emph{Artin fan} is a logarithmic algebraic stack that admits a cover by a disjoint union of Artin cones that is representable and strict \'etale. 
\end{definition}

As explained in \cite[Section 5]{Olsson_loggeometryalgstacks} an Artin cone $\calA_P=\big[U_P\big/T\big]$ naturally carries a logarithmic structure making the morphism $\calA_P\rightarrow\LOG_k$ representable and \'etale. Therefore, for every Artin fan $\calA$, the tautological morphism $\calA\rightarrow \LOG_k$ is \'etale and so $\calA$ itself is logarithmically \'etale over $k$\footnote{Earlier versions of both  \cite{AbramovichChenMarcusWise_boundedness} and this article used to define Artin fans a logarithmic algebraic stack that are logarithmically \'etale over $k$. We follow \cite{AbramovichChenMarcusWise_boundedness} and restrict the class of Artin fans for the sake of Theorem \ref{thm_Artinfan=Katostack} (also see Example \ref{example_mu_k} below).}. If $\calA\rightarrow \LOG_k$ is also representable, we say that $\calA$ has \emph{faithful monodromy}. The category of Artin fans (with faithful monodromy) is the full $2$-subcategory of the $2$-category of logarithmic algebraic stacks whose objects are Artin fans (with faithful monodromy respectively). 

We now consider Artin fans without monodromy.

\begin{definition}[Artin fans without monodromy]\label{definition_Artinfanswomonodromy}
A homomorphism $P\rightarrow Q$ of monoids induces a torus-invariant morphism $U_Q\rightarrow U_P$ and therefore a logarithmic morphism $\calA_Q\rightarrow \calA_P$ on the level of quotient stacks. If $P\rightarrow Q$ is a face map, then $\calA_Q\rightarrow\calA_P$ is a representable strict open immersion. Therefore we can associate to every Kato fan $F$ an Artin fan (with faithful monodromy)
\begin{equation*}
\calA_F=\lim_{\longrightarrow}\calA_{P}
\end{equation*}
by glueing the $\calA_P$ over all open affine subsets $U=\Spec P$ of $F$, such that there is a natural isomorphism $\big(\vert\calA_F\vert,\Mbar_{\calA_F}\big)\simeq F$ of sharp monoidal spaces (see Figure \ref{figure_Artinfan}). Artin fans of the form $\calA_F$ are said to be \emph{without monodromy}.
\end{definition}

\begin{figure}[h]\begin{tikzpicture}
\fill (0,0) circle (0.20 cm)
      (2,0) circle (0.12 cm)
      (-2,0) circle (0.12 cm)
      (0,2) circle (0.12 cm)
      (0,-2) circle (0.12 cm)
      (2,2) circle (0.05 cm)
      (2,-2) circle (0.05 cm)
      (-2,2) circle (0.05 cm)
      (-2,-2) circle (0.05 cm);
\draw [->] (0.5,0) -- (1.6,0);
\draw [->] (0.4,2) -- (1.7,2);
\draw [->] (0,0.5) -- (0,1.6);
\draw [->] (2,0.4) -- (2,1.7);
\draw [->] (-0.5,0) -- (-1.6,0);
\draw [->] (-0.4,-2) -- (-1.7,-2);
\draw [->] (0,-0.5) -- (0,-1.6);
\draw [->] (-2,-0.4) -- (-2,-1.7);
\draw [->] (-2,0.4) -- (-2,1.7);
\draw [->] (2,-0.4) -- (2,-1.7);
\draw [->] (0.4,-2) -- (1.7,-2);
\draw [->] (-0.4,2) -- (-1.7,2);

\node at (0.5,0.5) {$1$};
\node at (2.5,0) {$\G_m$};
\node at (0,2.5) {$\G_m$};
\node at (2.5,2.5) {$\G_m^2$};
\node at (-2.5,0) {$\G_m$};
\node at (0,-2.5) {$\G_m$};
\node at (-2.5,-2.5) {$\G_m^2$};
\node at (2.5,-2.5) {$\G_m^2$};
\node at (-2.5,2.5) {$\G_m^2$};
\end{tikzpicture}\caption{The Artin fan $\big[\PP^1\times\PP^1\big/\G_m^2\big]$. It is glued from four copies of the Artin cone $\calA_{\N^2}$ along the open subcones $\calA_{\N}$ as in Definition \ref{definition_Artinfanswomonodromy}.}\label{figure_Artinfan}\end{figure}

\begin{proposition}\label{lemma_Artinfanswomonodromy} The association $F\mapsto \calA_F$ defines a fully faithful functor from the category of Kato fans into the category of Artin fans. Moreover, if $f\mathrel{\mathop:}F_1\rightarrow F_2$ is a strict morphism of Kato fans, the induced morphism $\calA(f)\mathrel{\mathop:}\calA_{F_1}\rightarrow\calA_{F_2}$ is a representable strict \'etale morphism of logarithmic stacks  
\end{proposition}

\begin{proof}
By \cite[Proposition 5.17]{Olsson_loggeometryalgstacks} we have
\begin{equation*}
\Hom_{\mathbf{Log.St.}}(U_Q,\calA_P)=\Hom_{\mathbf{Mon}}\big(P,\Gamma(U_Q,\Mbar_{U_Q})\big)=\Hom_{\mathbf{Mon}}(P,Q)
\end{equation*}
and the left hand side is equal to $\Hom_{\mathbf{Log.St.}}(\calA_Q,\calA_P)$, since every logarithmic morphism $U_Q\rightarrow \calA_P$ is $T_Q=\Spec k[Q^{gp}]$-invariant. This shows that for two sharp monoids $P$ and $Q$ the natural map 
\begin{equation*}
\Hom_{\mathbf{Mon}}(P,Q)\longrightarrow\Hom_{\mathbf{Log.St.}}(\calA_Q,\calA_P)
\end{equation*}
is a bijection.

Let $F$ be a Kato fan. Glueing over open immersions together with the above observation yields the functoriality of the association $F\mapsto\calA_F$. Suppose that $f\mathrel{\mathop:}F_1\rightarrow F_2$ is a strict morphism of Kato fans. Then the induced morphism $\calA(f)\mathrel{\mathop:}\calA_{F_1}\rightarrow\calA_{F_2}$ is Zariski-locally an isomorphism, i.e.,  given an open affine subset $U=\Spec P$ such that $f\vert_U$ induces an isomorphism onto its image, the morphism $\calA(f)$ induces an isomorphism $\calA_U\xrightarrow{\sim}\calA_{f(U)}$. Therefore $\calA(f)$ is necessarily representable and strict \'etale. 
\end{proof}

The following Theorem \ref{thm_Artinfan=Katostack} is a translation of \cite{CavalieriChanUlirschWise_tropstack}*{Theorem 3} to our setting. It shows that Artin fans are lifts of Kato stacks to the category of logarithmic algebraic stacks.

\begin{theorem}\label{thm_Artinfan=Katostack}
There is a natural equivalence between the $2$-category of Artin fans and the $2$-category of Kato stacks 
such that, whenever $\big[U\big/R\big]\simeq \calF$ is a strict groupoid presentation of $\calF$ by Kato fans, the corresponding Artin fan $\calA_\calF$ is the quotient of the strict \'etale groupoid $\big(\calA_R\rightrightarrows \calA_U)$.
\end{theorem}

Note that by Proposition \ref{lemma_Artinfanswomonodromy} the category of Artin fans without monodromy is equivalent to the category of Kato fans. Therefore it is, in particular, only a $1$-category and the quotient of the strict \'etale groupoid $\big(\calA_R\rightrightarrows \calA_U)$ is representable in the $2$-category of logarithmic stacks. 

Using the equivalence between Kato stacks and cone stacks explained above, one can immediately deduce Theorem \ref{thm_Artinfan=Katostack}  from the arguments in \cite{CavalieriChanUlirschWise_tropstack}*{Section 6.4 and 6.5}. Given an Artin fan $\calA$, we obtain a Kato stack $\calF$ such that $\calA\simeq\calA_\calF$ by considering the association
\begin{equation*}\begin{split}
\mathbf{Aff}^{op}&\longrightarrow \mathbf{Groupoids}\\
\Spec P &\longmapsto \HOM_{\mathbf{Log.St.}}(\calA_P,\calA) 
\end{split}\end{equation*}
that sends $\Spec P$ to the groupoid of morphisms $\calA_P\rightarrow \calA$ of logarithmic stacks. 

Conversely, given a Kato stack $\calF$, the associated Artin fan $\calA_\calF$ is the stackification of the fibered category whose fiber over a logarithmic scheme $S$ is the groupoid $\calF\big(\Gamma(S,\Mbar_S)\big)$. In other words, the fiber of $\calA_\calF$ over a logarithmic scheme is the groupoid of collections of objects in $\calF\big(\Mbar_{S,s}\big)$ (indexed by the geometric points of $s$) that are naturally compatible with respect to \'etale specializations. 

\begin{example}
Let $G$ be a (discrete) group. We may think of $G$ as a group object in the category of Kato fans, whose underlying Kato fan is a disjoint union of points $\Spec 0$. Now suppose that $G$ acts on a Kato fan $F$ by automorphisms. The Artin fan $\calA_\calF$ associated to the quotient stack $\calF=\big[F/G\big]$ is the quotient $\big[\calA_F\big/G\big]$. In particular, the Artin fan associated to the classifying stack $\mathbf{B}G$ (over $\mathbf{Aff}$) is nothing but $\mathbf{B}G$ (over $\mathbf{LSch}$ or $\mathbf{Sch}$). 
\end{example}

\begin{example}
We may consider the category $\mathbf{Aff}^{st}$ itself as a category fibered in groupoids over $\mathbf{Aff}^{st}$, i.e. as a combinatorial Kato stack. Denote the associated Kato stack by $\FAN$. We may think of $\FAN$ as the $2$-colimit of the diagram of all affine Kato fans with all possible strict morphisms between them (including all automorphisms). By \cite[Corollary 5.25]{Olsson_loggeometryalgstacks} the $2$-colimit of the induced diagram in the category of the Artin fans is equal to $\LOG_k$, the classifying stack of fine and saturated logarithmic structures over $k$. Therefore $\calA_{\FAN}$ is nothing but $\LOG_k$. 
\end{example}

The following Example \ref{example_mu_k} shows that not every logarithmic algebraic stacks that is logarithmically \'etale over $k$ comes from a Kato stack. We thank Jonathan Wise for allowing us to include it here. 

\begin{example}\label{example_mu_k}
Let $l\geq 1$ and consider the operation 
\begin{equation*}\begin{split}
\mu_l\mathrel{\mathop:}\G_m\times\A^1&\longrightarrow \A^1\\
(t,x)&\longmapsto t^l\cdot x \ .
\end{split}\end{equation*}
The argument in \cite[Section 5]{Olsson_loggeometryalgstacks} shows that the quotient stack $\big[\A^1\big/_{\mu_l}\G_m\big]$ under the operation $\mu_l$ is logarithmically \'etale over $k$. For $l>1$ none of the $\big[\A^1\big/_{\mu_l}\G_m\big]$ admit a strict \'etale cover by a disjoint union of Artin cones, since $\big[\A^1\big/_{\mu_l}\G_m\big]$ is naturally a $\mu_l$-gerbe over $\big[\A^1\big/\G_m]$ which does not admit strict \'etale covers by $\big[\A^1\big/\G_m\big]$.
\end{example}

\begin{remark}
The heuristic in the above Example \ref{example_mu_k} is that Kato stacks (as defined in this article) lack enough structure to encode $\mu_l$-gerbes over $[\A^1/G_m]$. It would be very interesting to investigate whether the various forms of stacky fans, which arise as combinatorial models in the theory of toric stacks \cite{BorisovChenSmith_toricDMstacks, FantechiMannNironi_smoothtoricDMstacks, GeraschenkoSatriano_toricstacksI, GillamMolcho_stackyfans}, can be used to enhance the category of Kato stacks so that examples such as the above are included.
\end{remark}

%%%%%%%%%%%%%%%%%%%%%%%%%%%%%%%%%%%%%%%%%%%%%%%%%%%%%%

\section{From logarithmic stacks to Artin fans}\label{section_logstacktoArtinfan} 

In this section we recall from \cite{AbramovichChenMarcusWise_boundedness} how to canonically associate to every logarithmic algebraic stack $\calX$ an Artin fan $\calA_\calX$ (with faithful monodromy) together with a natural strict morphism $\phi_\calX\colon\calX\rightarrow\calA_{\calX}$ and study its basic properties. The goal is to set the stage for the proofs of Theorem \ref{thm_trop=anal} (in Section \ref{section_trop=anal}) and Theorem \ref{thm_ACPArtin} (in Section \ref{section_Artinfansmoduli}) from the introduction. 

\subsection{Artin fans with faithful monodromy} Before we start, recall that an Artin fan $\calA$ is said to have \emph{faithful monodromy} if the tautological strict morphism $\calA\rightarrow \LOG_k$ is representable. 

\begin{proposition}[\cite{AbramovichChenMarcusWise_boundedness}*{Proposition 3.2.1}]\label{prop_logstacktoArtinfan}
Let $\calX$ be a fine and saturated logarithmic stack locally of finite type over $k$. Then there is an Artin fan $\calA_\calX$ with faithful monodromy together with a natural strict morphism $\phi_\calX\colon \calX\rightarrow \calA_\calX$ that is initial among all strict morphisms to an Artin fan with faithful monodromy. 
\end{proposition}

In other words, given a strict morphism $\phi\colon\calX\rightarrow \calA$ to an Artin fan $\calA$ with faithful monodromy there is unique (automatically strict) morphism $\calA_\calX\rightarrow \calA$ that makes the diagram 
\begin{center}\begin{tikzcd}
\calX  \arrow[r, "\phi_\calX"] \arrow[rrd, bend right, "\phi"']& \calA_\calX \arrow[rd,dashed] & \\
& & \calA
\end{tikzcd}\end{center}
commute. The Artin fan $\calX$ associated to a logarithmic stack $\calX$ always has faithful monodromy. So, in particular, if $\calX$ is an Artin fan, then the associated Artin fan $\calA_\calX$ is equal to $\calX$ itself if and only if $\calX$ has faithful monodromy (also see Example \ref{example_M11} below).  

\begin{definition}\label{definition_small} 
A logarithmic scheme $X$ is said to be \emph{small}, if the Artin fan $\calA_X$ of $X$ is an Artin cone $\calA_{P_X}$ for the monoid $P_X=\Gamma(X,\Mbar_X)$ and the induced isomorphism $P_X\xrightarrow{\sim}\Gamma(X,\Mbar_X)$ lifts to a chart of $X$. 
\end{definition}

The proof of Proposition \ref{prop_logstacktoArtinfan} proceeds by first noticing that the property of admitting a factorization as above is stable under taking colimits in the category of representable strict and smooth morphisms $\mathcal{U}\rightarrow \calX$. So, the central point of this proof is to show that every geometric point of $\calX$ has a smooth strict neighborhood that is small. 

An immediate application of Proposition \ref{prop_logstacktoArtinfan} shows that the construction of $\calA_\calX$ is functorial with respect to strict morphisms. 

\begin{corollary}
Let $f\colon\calX\rightarrow\calY$ be a strict morphism of fine and saturated logarithmic algebraic stacks locally of finite type over $k$. Then there is a natural strict morphism $\calA(f)\colon\calA_\calX\rightarrow\calA_\calY$ that makes the diagram
\begin{center}\begin{tikzcd}
\calX \arrow[r,"\phi_\calX"] \arrow[d,"f"']& \calA_\calX\arrow[d, "\calA(f)"]\\
\calY \arrow[r,"\phi_\calY"] & \calA_\calY
\end{tikzcd}\end{center}
commute.
\end{corollary}

In general, the construction of $\phi_\calX\colon \calX\rightarrow \calA_\calX$ fails to be functorial with respect to a general logarithmic morphism. We refer the reader to \cite[Example 3.3.1]{AbramovichChenMarcusWise_boundedness} and \cite[Section 5.4.1]{AbramovichChenMarcusUlirschWise_logsurvey} for a counterexample. This failure of functoriality is due to the requirement that $\calA_\calX$ has faithful monodromy, which, on the one hand, makes the automorphism groups present in $\calA_{\calX}$ completely combinatorial but, on the other hand, forgets some crucial information about the monodromy of $\calX$ that operates trivially on the characteristic monoid $\Mbar_\calX$. 

\begin{remark}
The notion of a \emph{small} logarithmic scheme has already been defined in \cite{Ulirsch_functroplogsch}*{Section 4.2} and in \cite{AbramovichChenMarcusUlirschWise_logsurvey}*{Section 5.3}. There, however, we did not require the isomorphism $P_X\xrightarrow{\sim}\Gamma(X,\Mbar_X)$ to lift to a chart of $X$ and so the condition in Definition \ref{definition_small} is stronger than the previous one. In this article we prefer this stronger condition to the earlier one, since existence of a chart allows us to choose a (usually non-unique) strict morphism to the toric variety $\Spec k[P_X]$. Our new definition is closer in spirit to the analogous definition of a small toric chart in \cite{AbramovichCaporasoPayne_tropicalmoduli}*{Definition 6.2.4}.
\end{remark}

%%%%%%%%%%%%%%%%%%%%%%%%%%%%%%%%%%%%%%%%%%%%%%%%%%%%%%

\subsection{Logarithmic stratifications and the Artin fan}
Suppose now that $\calX$ is a logarithmically smooth stack. There is a natural stratification of $\calX$ into locally closed connected substacks that is \'etale locally in toric charts induced from the stratification of a toric variety into torus-orbits. 

Denote by $\calX_0$ the open substack on which the characteristic monoid $\Mbar_\calX=M_\calX/M_\calX^\ast$ is trivial. Then $\calX_0$ is smooth and its connected components are the strata of codimension $0$ in $\calX$. For $n\geq 1$ we inductively define $\calX_{n}$ to be the open substack of regular points of $\calX-(\calX_0\cup \cdots \cup\calX_{n-1})$; the connected components of $\calX_n$ are the strata of $\calX$ of codimsion $n$. 

The strata of $\calX$ are parametrized by the Zariski points $\big\vert\calA_\calX\big\vert$ of $\calA_\calX$: Given a point $x$ in $\big\vert \calA_\calX\big\vert$, the underlying set in $\big\vert\calX\big\vert$ in the associated stratum is nothing but the preimage 
\begin{equation*}
\phi_\calX^{-1}(x)\subseteq \big\vert\calX\big\vert \ .
\end{equation*}

Define a combinatorial cone stack $\calF_{\calX}^{comb}$ whose objects are the generic points $\eta_E$ of the strata $E$ of $\calX$ and whose morphisms are generated by the following two classes of arrows:
\begin{itemize}
\item the \'etale specializations $\eta_E\rightsquigarrow\eta_{E'}$, whenever $E'$ is a stratum in the closure of a stratum $E$ but not equal to $E$; and 
\item the image of the natural monodromy representation $\pi_1(E,y_E)\rightarrow \Aut(\Mbar_{X,\eta_E})$ of the fundamental group $\pi_1(E,y_E)$ of $E$ for some geometric point $y_E$ of $E$.
\end{itemize}
For the latter we refer the reader to \cite{Noohi_stackyfundamentalgroups} for the basic theory of \'etale fundamental groups of algebraic stacks; notice in particular that the above monodromy representation does not depend on the choice of $y_E$, since every stratum is connected. 

\begin{proposition}\label{prop_Artinfanoflogsmoothstack}
Let $\calX$ be a logarithmically smooth stack and denote by $\calF_\calX$ the Kato stack associated to the combinatorial Kato stack $\calF_{\calX}^{comb}$. Then there is a natural isomorphism $\calA_{\calX}\simeq\calA_{\calF_\calX}$.
\end{proposition}

\begin{proof}
It is enough to show that there is a natural equivalence between the combinatorial Kato stack associated to $\calA_{\calX}$ and $\calF_\calX^{comb}$ defined above. This follows from the following observations:

Let $U\rightarrow \calX$ be a smooth strict morphism from a small logarithmic scheme. Then $U$ is also logarithmically smooth and has a unique closed stratum 
\begin{equation*}
\big\{x\in U\big\vert\Gamma(U,\Mbar_U)\xrightarrow{\sim}\Mbar_{U,x} \textrm{ is an isomorphism.}\big\} \ .
\end{equation*} 
Moreover, there is a unique stratum $E$ of $\calX$ such that the preimage of $E$ in $U$ is the unique closed stratum of $U$. 

Now let $U'\rightarrow U$ be a strict smooth morphisms from a small logarithmic scheme. Then the restriction $\Mbar_\calX(U\rightarrow \calX)\rightarrow\Mbar_\calX(U'\rightarrow \calX)$ induces a strict \'etale morphism $\calA_{U'}\rightarrow \calA_{U}$. If the map $\calA_{U'}\rightarrow \calA_{U}$ is not an isomorphism, it determines an \'etale specialization $\eta_{E'}\rightsquigarrow\eta_E$ on the generic points of the strata $E'$ and $E$ (corresponding to the closed strata of $U$ and $U'$ respectively). 

Finally, the automorphisms of an object in $\HOM(\calA_P,\calA_\calX)$ are a subgroup of the monoid automorphisms $\Aut(P)$ of $P$, since $\calA_\calX\rightarrow\LOG_k$ is representable. This subgroup is precisely the image of the monodromy representation, since every automorphism of a representable \'etale cover of a stratum $E$ is realized by an automorphism of a suitable \'etale cover of a strict smooth neighborhood $U\rightarrow\calX$ of $\eta_E$ where $U$ is small and the preimage of $E$ is the closed stratum of $U$. 
\end{proof}

Let $\calF$ be a Kato stack and denote $\calF_{wom}$ the Kato stack whose combinatorial Kato stack is given as the following category fibered in groupoids over $\mathbf{Aff}^{st}$:
\begin{itemize}
\item The objects in the fiber over $\Spec P\in \mathbf{Aff}^{st}$ are precisely the same objects as the ones in $\calF(\Spec P)$; and 
\item the automorphisms of an object $\xi\in\calF(\Spec P)$ are precisely the image of the natural homomorphism $\Aut_\calF(\xi)\rightarrow \Aut_{\mathbf{Mon}}(P)$.
\end{itemize}

\begin{corollary}\label{cor_ArtinfanofArtinfan}
Let $\calF$ be a Kato stack. 
\begin{enumerate}[(i)]
\item The fibered category $\calF_{wom}$ is a Kato stack without monodromy and the natural morphism $\calA_\calF\rightarrow \calA_{\calF_{wom}}$ is initial among all strict morphisms to Artin fans without monodromy.  
\item The induced map $\Sigma_{\calF}\rightarrow\Sigma_{\calF_{wom}}$ is an isomorphism of generalized cone complexes. 
\end{enumerate}
\end{corollary}

In other words, the Artin fan associated to $\calA_\calF$ in the sense of Proposition \ref{prop_logstacktoArtinfan} is given by $\calA_{\calF_{wom}}$.

\begin{proof}[Proof of Corollary \ref{cor_ArtinfanofArtinfan}]
An immediate application of Proposition \ref{prop_Artinfanoflogsmoothstack}, applied to the Artin fan $\calA_\calF$, yields part (i). Consider Part (ii): We may think of both $\calF$ and $\calF_{woz}$ as diagrams in the category $\mathbf{Aff}^{st}$, which are equal, once we ignore the $2$-categorical extra structure of $\calF$. Thus, after applying the functor $\sigma_{(.)}$, the $1$-categorical colimits of the diagrams $\calF^{comb}$ and $\calF_{woz}^{comb}$ are equal. 
\end{proof} 

\begin{example}[Toric varieties]\label{example_toricArtinfan}
Let $P$ be a monoid. The Artin fan of the affine toric variety $U_P=\Spec k[P]$ is the Artin cone $\calA_P=\big[U_P\big/T\big]$ and the tautological morphism is the quotient map $U_P\rightarrow \calA_P$. For a general $T$-toric variety $Z$, glueing over $T$-invariant open affine subsets yields that the associated Artin fan $\calA_Z$ is the quotient stack $\big[Z\big/T\big]$ and the tautological morphism is the quotient map $Z\rightarrow \big[Z\big/T\big]$.
\end{example}

\begin{example}[Logarithmic schemes without monodromy]\label{example_logschemeswithoutmonodromy}
Recall now from \cite[Section 4.3]{Ulirsch_functroplogsch} that a Zariski logarithmic scheme $X$ is said to have \emph{no monodromy} if there is a strict morphism $(X,\Mbar_X)\rightarrow F$ of sharp monoidal spaces into a Kato fan $F$. In this case, by \cite[Proposition 4.14]{Ulirsch_functroplogsch}, there is a strict characteristic morphism $\phibar_X\mathrel{\mathop:}(X,\Mbar_X)\rightarrow F_X$ into a Kato fan $F_X$ that is initial among all strict morphisms to Kato fans. 

The Artin fan $\calA_X$ constructed in Proposition \ref{prop_logstacktoArtinfan} above is naturally isomorphic to $\calA_{F_X}$ and the induced diagram
\begin{center}\begin{tikzcd}
 (X,\Mbar_X) \arrow[rd, "\vert\phi_X\vert"'] \arrow[rrd,bend left, "\phibar_X"]&&\\
& \big(\vert\calA_X\vert,\Mbar_{\calA_X}\big) \arrow[r,"\sim"] & F_X
\end{tikzcd}\end{center}
commutes.
\end{example}

%%%%%%%%%%%%%%%%%%%%%%%%%%%%%%%%%%%%%%%%%%%%%%%%%%%%%%

\subsection{Artin fans and moduli spaces}\label{section_Artinfansmoduli}

We now conclude this section with the proof of Theorem \ref{thm_ACPArtin} from the introduction. We remind the reader of the following definition from \cite{CavalieriChanUlirschWise_tropstack}.

\begin{definition}
Let $P$ be a monoid. A \emph{tropical curve with edge lenghts in $P$} is a tuple $\Gamma = (V,E,L,h,m,d)$ consisting of 
\begin{itemize}
\item a finite graph $(V,E,L)$ with $n=\# L$ legs, 
\item a vertex weight $h\colon V\rightarrow \Z_{\geq 0}$, as well as
\item a marking of the legs, i.e. a bijection $m\colon\{1,\ldots, n\}\xrightarrow{\sim}L$, and
\item an edge length $d\colon E\rightarrow P-\{0\}$. 
\end{itemize}
\end{definition}

The \emph{genus} of $\Gamma$ is defined to be
\begin{equation*}
g(\Gamma)=b_1(G)+\sum_{v\in V}h(v)
\end{equation*}
and we say that $\Gamma$ is \emph{stable} if for every vertex $v\in V$ we have 
\begin{equation*}
2h(v)-2+\vert v\vert >0 \ , 
\end{equation*}
where $\vert v\vert$ denotes the valence of $v$ in the graph $G$.

Let $2g-2+n>0$. The \emph{moduli stack $\calM_{g,n}^{trop}$ of stable $n$-marked tropical curves of genus $g$} is the unique stack over $(\mathbf{Fan},\tau_{Zar})$ whose fiber over an affine Kato fan $\Spec P$ is the groupoid of tropical curves of genus $g$ and $n$ marked points, whose edge lengths lie in $P$. Using the identification between Kato stacks and cone stacks, one may rephrase \cite[Theorem 1]{CavalieriChanUlirschWise_tropstack} as saying that $\calM_{g,n}^{trop}$ is representable by a Kato stack.

In \cite[Section 3.4]{CavalieriChanUlirschWise_tropstack}, expanding on \cite{ChanGalatiusPayne_tropicalmoduliII, Ulirsch_tropHassett}, one can find an explicit description of $\calM_{g,n}^{trop}$ as a combinatorial Kato stack $J_{g,n}$, as follows: The objects of $J_{g,n}$ are finite stable vertex-weighted graphs $G$ of genus $g$ with $n$ marked legs and whose morphisms are generated by weighted edge contractions and automorphisms of $G$. A \emph{weighted edge contraction} is a graph contraction $\pi\colon G\rightarrow G'$ such that for every vertex $v\in V(G')$ we have $g(\pi^{-1}(v))=h(v)$, and an \emph{automorphism} of $G$ is a graph automorphism that respects both the vertex weight $h$ and the marking of the legs. The natural association $G\mapsto \Spec \N^{E(G)}$ makes $J_{g,n}$ into a combinatorial Kato stack, i.e. a category fibered in groupoids over $\mathbf{Aff}^{st}$: Let $G$ be an object of $J_{g,n}$. An affine open subset $U$ of $\Spec \N^{E(G)}$ is of the form $U=\Spec \N^{E'}$ for a subset $E'\subseteq E(G)$ and the restriction of $G\vert_U$ is given by contracting exactly the edges in $E(G)-E'$.

\begin{proof}[Proof of Theorem \ref{thm_ACPArtin}]
It is a well-known fact that the objects of $J_{g,n}$ parametrize the boundary strata of $\calMbar_{g,n}$ and that a  locally closed stratum $\calM_G$ (parametrizing stable curves with dual graph $G$) lies in the closure of another locally closed stratum $\calM_{G'}$ if and only if there is a weighted edge contraction $G'\rightarrow G$ (see \cite{AbramovichCaporasoPayne_tropicalmoduli}*{Section 3.3 and 7.1}). 

As explained in \cite[Proposition 7.2.1]{AbramovichCaporasoPayne_tropicalmoduli}, the structure results for the strata $\calM_G$ in $\calMbar_{g,n}$ in \cite{ArbarelloCornalbaGriffiths_moduliofcurves}*{Chapter XII Proposition (10.11)} imply that the image of the monodromy representation $\pi_1(\calM_G,y_G)\rightarrow \Aut(\N^{E(G)})$ is nothing but the image of the natural homomorphism $\Aut(G)\rightarrow\Aut(\N^{E(G)})$. This shows that, by Proposition \ref{prop_Artinfanoflogsmoothstack} and Corollary \ref{cor_ArtinfanofArtinfan}, the Artin fan $\calA_{\calM_{g,n}^{log}}$ is naturally isomorphic to the Artin fan of $\calM_{g,n}^{trop}$. 

The generalized cone complex of $\calM_{g,n}^{trop}$ is naturally isomorphic to $M_{g,n}^{trop}$ by \cite[Section 4.3]{AbramovichCaporasoPayne_tropicalmoduli} (also see \cite{ChanGalatiusPayne_tropicalmoduliII, Ulirsch_tropHassett}). Therefore Theorem \ref{thm_trop=anal} as well as Corollary \ref{cor_ArtinfanofArtinfan} (ii) imply that the induced map $M_{g,n}^{trop}\rightarrow \Sigma_{\calM_{g,n}^{log}}$ is an isomorphism of generalized cone complexes. 

Since, by \cite[Theorem 4]{CavalieriChanUlirschWise_tropstack} the natural tropicalization map $\trop_{g,n}\colon \calM_{g,n}^{log}\rightarrow \calA_{\calM_{g,n}^{trop}}$ is strict, the natural diagram 
\begin{center}\begin{tikzcd}
\calM_{g,n}^{log}\arrow[rrd,bend left, "\phi_{g,n}"] \arrow[rd,"\trop_{g,n}"']&&\\
& \calA_{\calM_{g,n}^{trop}} \arrow[r] & \calA_{\calM_{g,n}^{log}}
\end{tikzcd}\end{center}
commutes. So, applying the functor $(.)^\beth$, we immediately obtain the commutativity of the diagram in Theorem \ref{thm_ACPArtin}. 
\end{proof}

\begin{example}[The moduli stack $\calM_{1,1}^{trop}$]\label{example_M11}
Let $\Gamma$ be a trivalent tropical curve of genus one with one marked leg with underlying graph $G$. Then $\Gamma$ naturally has a non-trivial automorphism given by flipping the loop. Its image in the automorphism group of $\sigma_G=\R_{\geq 0}$ is trivial and therefore the Artin fan of $\calA_{\calM_{1,1}^{trop}}$ is not equal to $\calA_{\calM_{1,1}^{trop}}$ itself, but rather to $\calA_{\calM_{1,1}^{log}}=\big[\A^1/\G_m\big]$ (see Figure \ref{figure_M11}).
\end{example}

\begin{figure}[h]\begin{tikzpicture}
\fill (0,0) circle (0.12 cm);
\fill (2,0) circle (0.05cm);
\draw [->] (0.3,0) -- (1.8,0);

\node at (0,0.5) {$0$};
\node at (2,0.5) {$\Z_2$};
\node at (-1,0) {$\calM_{1,1}^{trop}$};

\fill (5,0) circle (0.12cm);
\fill (7,0) circle (0.05cm);
\draw [->] (5.3,0) -- (6.8,0);
\node at (5,0.5) {$0$};
\node at (7,0.5) {$0$};
\node at (9,0) {$\calF_{\calM_{1,1}^{log}}=\Spec \N$};

\end{tikzpicture}\caption{The Kato stacks $\calM_{1,1}^{trop}$ and $\calF_{\calM_{1,1}^{log}}=\Spec\N$. The labels indicate automorphism groups at each point.} \label{figure_M11}\end{figure}

%%%%%%%%%%%%%%%%%%%%%%%%%%%%%%%%%%%%%%%%%%%%%%%%%%%%%%

\section{Stacky generic fibers}\label{section_stackygenericfiber}

\subsection{Thuillier's generic fiber functor} Let $k$ be a non-Archimedean field. In \cite{Berkovich_book, Berkovich_etalecoho} Berkovich has introduced a refinement of the classical rigid analytic spaces in the sense of Tate \cite{Tate_rigidanalyticspaces} with favorable topological properties. We refer the reader to \cite{Berkovich_etalecoho} and \cite{Temkin_Berkovichspaces} for background and the standard notations in this theory. 

Denote the absolute value on $k$ by $\vert.\vert$ and write $\vert k^\ast\vert$ for the value group of $\vert .\vert$, a subgroup of $(\R_{> 0},\cdot)$. We write $\calM(\calB)$ for the Berkovich spectrum of a $k$-affinoid algebra, as well as $\calH(x)$ for the complete residue fields at a point $x$ of a $k$-analytic space $X$ and $\calH(x)^\circ$ for its valuation ring. If $X$ is a scheme that is locally of finite type over $k$, then $X^{an}$ denotes the analytification of $X$. Moreover, recall that an affinoid algebra $\calB$ is said to be \emph{strict} if there is an admissible surjecitve homomorphism $k\{r_1^{-1}T_1, \cdots, r_n^{-1}T_n\}\twoheadrightarrow \calB$ from a Tate algebra with $r_i\in \vert k^\ast\vert$ and that an analytic space is called \emph{strict} if it can be covered by strict affinoid domains. 

 Suppose that $k$ is carrying the trivial absolute value. Let $X$ be a scheme that is locally of finite type over $k$. Consider the functor that associates to an analytic space $Y$ the set of morphism $f\mathrel{\mathop:}Y\rightarrow X$ of locally ringed spaces such that the following conditions hold:
\begin{itemize}
\item For every open affine $U=\Spec A$ in $X$ and every affinoid domain $V=\calM(\calB)$ in $Y$ with $f(V)\subseteq U$ the induced homomorphism $f^\#\mathrel{\mathop:}A\rightarrow\calB$ is continuous and bounded, i.e. we have $\vert f^\#(a)\vert\leq 1$ for every $a\in A$. 
\item For every point $y\in Y$ the induced homomorphism 
\begin{equation*}
\calO_{X,f(y)}\longrightarrow \calO_{Y,y}\longrightarrow \calH(y)
\end{equation*}
has values in $\calH(y)^\circ$ and therefore induces a local homomorphism $\calO_{X,\phi(y)}\rightarrow\calH(y)^\circ$. 
\end{itemize}

In \cite[Proposition et D\'efinition 1.3]{Thuillier_toroidal} Thuillier shows that this functor is representable by an analytic space $X^\beth$. The association $X\mapsto X^\beth$ defines a functor
\begin{equation*}
(.)^\beth\mathrel{\mathop:}\mathbf{Schemes}_{loc.f.t./k}\longrightarrow \mathbf{An}_k 
\end{equation*}
that respect fiber products and lands in $\mathbf{An}_k^{st}$. There is a natural morphism from $X^\beth$ to an analytic domain in the usual analytic space $X^{an}$ associated to $X$. If $X$ is separated over $k$, this morphism is injective, and, if $X$ is proper over $k$, we have $X^\beth=X^{an}$. 

\begin{example}\label{example_affinebeth}
Let $X=\Spec A$ be an affine scheme of finite type over $k$. The analytic space $X^{an}$ associated  to $X$ is the space of multiplicative seminorms on $A$ that restrict to the trivial norm on $k$ and $X^\beth$ is the affinoid domain in $X^{an}$ consisting of all seminorms that are bounded, i.e. that fulfill $\vert a\vert \leq 1$ for all $a\in A$. 
\end{example}

In general, one can construct $X^\beth$ by taking an open affine cover $U_i=\Spec A_i$ of $X$ and glueing the $U_i^\beth$ (as constructed in Example \ref{example_affinebeth}) over their intersections. So, in particular, we observe that $X^\beth$ is a strict analytic space.

\subsection{Stacky generic fibers}

The goal of this section is to generalize the functor $(.)^\beth$ to algebraic stacks locally of finite type over $k$ and to study its basic topological properties. In \cite{Ulirsch_trop=quot} the author has introduced a theory of geometric stacks over the category $\mathbf{An}_k$  of $k$-analytic spaces with the \emph{\'etale topology} (see \cite[Section 4]{Berkovich_etalecoho}), so called \emph{analytic stacks}. 

Unfortunately the functor $(.)^\beth$ does not send \'etale morphisms in the category of schemes to \'etale morphisms in $\mathbf{An}_k$ and therefore does not define a morphism between the respective \'etale sites. Therefore, in this article, we prefer to work with an alternative approach to non-Archimedean analytic stacks that can be found in \cite{Yu_Gromovcompactness, PortaYu_higherGAGA}. The theory essentially remains the same, but the base category is replaced by the category of strictly $k$-analytic spaces $\mathbf{An}_k^{st}$ with the $G$-\'etale topology $\tau_{G-et}$ (in the sense of \cite{ConradTemkin_descent}) or, in the terminology of \cite{Yu_Gromovcompactness, PortaYu_higherGAGA}, the quasi-\'etale topology. 

\begin{definition}[\cite{Yu_Gromovcompactness}*{Definition 6.2}]
A \emph{strict analytic stack} $\calX$ is a stack over $(\mathbf{An}^{st}_k)_{G-et}$ whose diagonal is representable (by \emph{strict analytic spaces}; defined in analogy with algebraic spaces) and that admits a surjective $G$-smooth morphism $U\rightarrow\calX$ from a strict analytic space $U$\footnote{In the category of strict analytic spaces the notions of $G$-smooth or $G$-\'etale morphisms (the terminology in \cite{ConradTemkin_descent}) are interchangable with the notions of \emph{quasi-smooth} or \emph{quasi-\'etale} (the terminology in \cite{Ducros_families}).}. 
\end{definition}

The $2$-category of strict analytic stacks is the full subcategory of the $2$-category of stacks over $(\mathbf{An}^{st}_k)_{G-et}$, whose objects are strict analytic stacks. The basic results concerning groupoid presentations and their quotients, as e.g. developed in \cite{Ulirsch_trop=quot}, immediately transfer to the framework of strict analytic stacks. In particular, given a $G$-smooth and surjective groupoid $\big(R\rightrightarrows U\big)$ in the category strict analytic spaces, we find that the quotient stack $\big[U\big/R\big]$ is a strict analytic stack.

In \cite{AbramovichCaporasoPayne_tropicalmoduli}*{Definition 6.1.2} the authors generalize the $(.)^\beth$-functor to separated algebraic spaces (whose analytification is an analytic space by \cite{ConradTemkin_algspaces}*{Theorem 1.2.1}). We want to generalize the $(.)^\beth$-functor to all algebraic stacks that are locally of finite type over $k$.

Given a smooth (or \'etale) morphism $f\colon X\rightarrow Y$ of schemes, the induced map $f^\beth\colon X^\beth\rightarrow Y^\beth$ is $G$-smooth (or $G$-\'etale respectively). Therefore the functor $(.)^\beth$ induces a morphism
\begin{equation*}
\beta\colon (\mathbf{An}^{st}_k,\tau_{G-et})\longrightarrow (\mathbf{Sch}_{loc.f.t./k}, \tau_{et})
\end{equation*}
of the respective sites that respects fiber products and sends smooth morphisms to $G$-smooth morphisms. Thus, we have the following Proposition \ref{prop_beth}.

\begin{proposition}\label{prop_beth}
Given an algebraic stack $\calX$, the pullback $\beta^\ast \calX$ is a strict analytic stack such that, whenever $\big[U\big/R\big]\simeq \calX$ is a smooth groupoid presentation of $\calX$ the stack $\beta^\ast\calX$ is a quotient of the induced groupoid $\big(R^\beth\rightrightarrows U^\beth\big)$. 
\end{proposition}

This a version of \cite{Ulirsch_trop=quot}*{Proposition 2.19} over the site $(\mathbf{An}_k^{st},\tau_{G-et})$. Its proof proceeds in complete analogy and we leave the details to the avid reader (also see \cite{CavalieriChanUlirschWise_tropstack}*{Proposition 1.11} for a general principle that applies to this situation). 

We set $\calX^\beth=\beta^\ast\calX$. By the general properties of pullbacks, the association $\calX\mapsto \beta^\ast\calX$ defines a lax $2$-functor 
\begin{equation*}
(.)^\beth\colon \mathbf{Alg.Stacks}_{loc.f.t./k}\longrightarrow \mathbf{An.Stacks}_k^{st}
\end{equation*}
from the $2$-category of algebraic stacks (locally of finite type over $k$) to the category of strict analytic stacks. 

Expanding on \cite{Ulirsch_trop=quot}*{Section 3.1}, we may functorially associate to a strict analytic stack $\calX$ its \emph{underlying topological space} $\vert \calX\vert$ such that whenever there is a $G$-smooth cover $U\rightarrow \calX$ the induced map $\vert U\vert \rightarrow \vert \calX\vert$ is a surjective quotient map.

An analogue of Proposition \cite[Proposition 3.8]{Ulirsch_trop=quot} for the $(.)^\beth$ functor shows that, if $\calX$ is an algebraic stack that is locally of finite type over $k$, we can describe the underlying topological space $\big\vert\calX^\beth\big\vert$ of $\calX^\beth$ as follows: 
\begin{itemize}
\item Its points are equivalence classes of pairs $(R,\phi)$ consisting of a valuation ring extending $k$ and a morphism $\phi\mathrel{\mathop:}\Spec R\rightarrow\calX$. 
\item Two such pairs $(R,\phi)$ and $(S,\psi)$ are hereby \emph{equivalent}, if there is a valuation ring $\calO$ extending both $R$ and $S$ making the diagram
\begin{equation*}\begin{CD}
\Spec(\calO)@>>>\Spec(S)\\
@VVV @VV\psi V\\
\Spec(R) @>\phi>> \calX
\end{CD}\end{equation*}
$2$-commutative. 
\end{itemize}

%%%%%%%%%%%%%%%%%%%%%%%%%%%%%%%%%%%%%%%%%%%%%%%%%%%%%%

\section{Functorial tropicalization of logarithmic stacks}\label{section_functroplogstack}

In \cite{Ulirsch_functroplogsch} the author constructs a natural tropicalization maps associated to a (fine and saturated) logarithmic scheme. In this section we generalize this construction to the case of a fine and saturated logarithmic stack $\calX$ that is locally of finite type over $k$. In the case that $\calX$ is a logarithmically smooth Deligne-Mumford stack, this agrees with the non-Archimedean skeleton of $\calX$ constructed in \cite[Section 6]{AbramovichCaporasoPayne_tropicalmoduli}. 

Consider a (fine and saturated) monoid $P$. Denote by $\sigma_P=\Hom(P,\R_{\geq 0})$ the associated rational polyhedral cone and by $\sigmabar_P=\Hom(P,\Rbar_{\geq 0})$ its canonical compactification. It is well-known that $\sigmabar_{P}$ has a stratification by locally closed subsets that correspond to the points of $\Spec P$ (and, if $P$ is toric, to the torus orbits of $U_P=\Spec k[P]$) (see \cite{Ulirsch_functroplogsch, HuszarMarcusUlirsch_troplogclutch&glue} for details). Write $\chi^p$ for the character of $p\in P$ as a basis element of $k[P]=\bigoplus_{p\in P}k\cdot\chi^p$. There is a natural continuous tropicalization map 
\begin{equation*}\begin{split}
\trop_P\colon U_P^\beth&\longrightarrow\sigmabar_P\\
x&\longmapsto \big(p\mapsto -\log \vert \chi^p\vert_x\big)
\end{split}\end{equation*}
that is functorial with respect to homomorphisms of the monoid $P$ (or alternatively with respect to the toric morphisms of the affine toric variety $U_P=\Spec k[P]$). 

Recall from Definition \ref{definition_small} above that a logarithmic scheme $X$ is said to be \emph{small}, if the Artin fan $\calA_X$ of $X$ is an Artin cone $\calA_{P_X}$ for the monoid $P_X=\Gamma(X,\Mbar_X)$ and the induced isomorphism $P_X\xrightarrow{\sim}\Gamma(X,\Mbar_X)$ lifts to a chart of $X$. 
Given a small logarithmic scheme $X$ we write $\sigmabar_X$ for the canonical compactification $\sigmabar_{P_X}$ of $\sigma_{P_X}$.

\begin{proposition}\label{prop_smalltrop}
Let $X$ be a small logarithmic scheme. 
\begin{enumerate}[(i)]
\item There is a natural continuous tropicalization map 
\begin{equation*}
\trop_X\colon X^\beth \longrightarrow \sigmabar_{X}
\end{equation*}
such that, whenever $\gamma\colon P_X\rightarrow \calO_X$ is a lift of $P_X\xrightarrow{\sim}\Gamma(X,\Mbar_X)$ to a chart of $X$, the tropicalization map factors as 
\begin{equation*}
X^\beth \xlongrightarrow{\gamma^\sharp} U_{P_X}^\beth \xlongrightarrow{\trop_{P_X}} \sigmabar_X=\sigmabar_{P_X} \ .
\end{equation*}
\item If $f\colon X\rightarrow X'$ is a logarithmic morphism between two small logarithmic schemes, there is a morphism $\sigma(f)\colon \sigma_X\rightarrow \sigma_{X'}$ that extends to a continuous map $\sigmabar(f)\colon \sigmabar_X\rightarrow\sigmabar_{X'}$ that makes the diagram
\begin{equation*}\begin{CD}
X^\beth @>\trop_X>>\sigmabar_X\\
@Vf^\beth VV @VV\sigmabar(f)V\\
(X')^\beth @>\trop_{X'}>>\sigmabar_{X'}
\end{CD}\end{equation*}
commute. The association $f\mapsto\sigmabar(f)$ is functorial in $f$. 
\end{enumerate}
\end{proposition}

\begin{proof}
Consider two lifts $\gamma,\gamma'\colon P_X\rightarrow \calO_X$ of $P_X\xrightarrow{\sim}\Gamma(X,\Mbar_X)$ to a chart of $X$. For $p\in P_X$ we have $\gamma(p)=a \gamma'(p)$ for some unit $a\in\calO_X^\ast$ and therefore 
\begin{equation*}
\big\vert \gamma(p)\big\vert _x =\big\vert a \gamma'(p)\big\vert_x=\big\vert a\big\vert_x \big\vert \gamma'(a)\big\vert_x = \big\vert \gamma'(a)\big\vert_x \ ,
\end{equation*}
since $\vert a\vert_x=1$ for all $a\in \calO_X^\ast$, as in turn $\vert a\vert_x\leq 1$ and $\vert a^{-1}\vert_x\leq 1$. Therefore we also have $\trop_P\circ(\gamma^\sharp)^\beth=\trop_P\circ \big((\gamma')^\sharp\big)^\beth$ and this shows that the continuous tropicalization $\trop_X$ is well-defined, which proves part (i). 

For part (ii), a logarithmic morphism $f\colon X\rightarrow X'$ induces a monoid homomorphism $f^\flat\colon P_{X'}\rightarrow P_X$, which in turn induces a cone map $\sigma(f)\colon\sigma_X\rightarrow\sigma_{X'}$ that extends to a continuous map $\sigmabar(f)\colon \sigmabar_X\rightarrow \sigmabar_{X'}$ on the canonical extensions. Choosing lifts of $P_X$ and $P_{X'}$ to charts of $X$ and $X'$ respectively, we obtain a diagram 
\begin{equation*}\begin{CD}
\calO_X @<<< P_X\\
@AAA @AAA\\
\calO_{X'}@<<< P_{X'}
\end{CD}\end{equation*}
that commutes up to multiplication with a unit in $\calO_X^\ast$. By the above reasoning the tropicalization map is invariant under multiplication by units and therefore the induced diagram
\begin{equation*}\begin{CD}
X^\beth @>\trop_X>>\sigmabar_X\\
@Vf^\beth VV @VV\sigmabar(f)V\\
(X')^\beth @>\trop_{X'}>>\sigmabar_{X'}
\end{CD}\end{equation*}
commutes. The functoriality of $f\mapsto \sigmabar(f)$ is an immediate consequence of its definition.
\end{proof}

\begin{remark}
The construction of $\trop_X$ in Proposition \ref{prop_smalltrop} is really a version of the \emph{local tropicalization map} of Popescu-Pampu and Stepanov \cite{PopescuPampuStepanov_localtrop}. When $X=\Spec A$ we may describe it as the map $X^\beth\rightarrow \sigmabar_X=\Hom(P_X,\Rbar_{\geq 0})$ given by
\begin{equation*}
x\longmapsto \big(p\mapsto -\log \vert \gamma(p)\vert_x\big) 
\end{equation*} 
for $x\in X^\beth$.
\end{remark}

 Let $\Sigma$ be a generalized cone complex. Recall from \cite{AbramovichCaporasoPayne_tropicalmoduli}*{Section 2.6} (also see \cite{Ulirsch_functroplogsch}*{Section 3.3 and 3.5} that the \emph{canonical extension} $\Sigmabar$ of $\Sigma$ is given as the topological colimit of the induced diagram of extended cones $\sigmabar_\alpha$.   Any morphism $f\colon \Sigma\rightarrow\Sigma'$ of generalized cone complexes canonically extends to a continuous map $\overline{f}\colon\Sigmabar\rightarrow \Sigmabar'$ of the associated canonical extensions.

\begin{proposition}\label{prop_functroplogstack} Let $\calX$ be a fine and saturated logarithmic stack locally of finite type over $k$. Denote by $\calF_\calX$ the Kato stack associated to the Artin fan $\calA_\calX$ associated to $\calX$ and write $\Sigma_\calX$ for the generalized cone complex associated to $\calF_\calX$ as well as $\Sigmabar_{\calX}$ for its canonical extension. 
\begin{enumerate}[(i)]
\item There is a unique natural continuous \emph{tropicalization map} 
\begin{equation*}
\trop_\calX\colon\big\vert\calX^\beth\big\vert \longrightarrow \Sigmabar_\calX
\end{equation*}
such that, whenever $U\rightarrow \calX$ is a strict smooth morphism from a small logarithmic scheme $U$ to $\calX$, the induced diagram 
\begin{equation*}\begin{CD}
U^\beth @>\trop_U>>\sigmabar_{U}\\
@VVV @VVV\\
\big\vert\calX^\beth\big\vert @>\trop_\calX>>\Sigmabar_\calX 
\end{CD}\end{equation*}
commutes.
\item A logarithmic morphism $f\colon \calX\rightarrow \calX'$ induces a morphism $\Sigma(f)\colon \Sigma_\calX\rightarrow \Sigma_{\calX'}$ of generalized cone complexes that extends to a continuous map $\Sigmabar(f)\colon \Sigmabar_\calX\rightarrow\Sigmabar_{\calX'}$ which makes the diagram
\begin{equation*}\begin{CD}
\big\vert\calX^\beth \big\vert @>\trop_\calX>>\Sigmabar_\calX\\
@V\vert f^\beth\vert VV @VV\Sigmabar(f)V\\
\big\vert (\calX')^\beth\big\vert @>\trop_{\calX'}>>\Sigmabar_{\calX'}
\end{CD}\end{equation*}
commute. 
\item If $\calX$ is a logarithmically smooth Deligne-Mumford stack, the tropicalization map has a section 
\begin{equation*}
J_\calX\colon \Sigmabar_\calX\longrightarrow \big\vert \calX^\beth\big\vert
\end{equation*}
such that the composition 
\begin{equation*}
\bfp_\calX=J_\calX\circ \trop_\calX\colon \big\vert\calX^\beth\big\vert\longrightarrow \big\vert \calX^\beth\big\vert
\end{equation*}
is a strong deformation retraction onto the non-Archimedean skeleton of $\calX^\beth$ (in the sense of \cite{Thuillier_toroidal, AbramovichCaporasoPayne_tropicalmoduli}). 
\end{enumerate}
\end{proposition}

\begin{proof}
Choose a strict smooth  cover of $\calX$ by a disjoint union $U=\bigsqcup_i U_i$ of a small logarithmic schemes. The fiber product $R=U\times_\calX U$ may not be small, but can be covered a disjoint union $V=\bigsqcup_j V_j$ of small logarithmic schemes. By \cite{Ulirsch_trop=quot}*{Proposition 3.2 (ii) and Proposition 3.4 (ii)} (or rather their analogues for strict analytic stacks, proved in the exact same way) the underlying topological space $\big\vert \calX^\beth\big\vert$ is the colimit of $\big\vert V^\beth\big\vert\rightrightarrows  \big\vert U^\beth\big\vert$.

Let $\{\Spec P_i\rightarrow \calF_\calX\}_{i\in I}$ be a collection of strict morphisms that cover $\calF_\calX$. Again, setting $F'=\bigsqcup \Spec P_i$, the fiber product $F''=F'\times_\calF F'$ may not be affine, but we can find a strict cover $\{\Spec Q_j\rightarrow F''\}_{j\in J}$. Since every strict morphism $\Spec P\rightarrow \calF_\calX$ must factor through at least one of the $\Spec P_i\rightarrow \calF_\calX$, we have that $\Sigmabar_\calX$ is in fact equal to the topological colimit $\Sigmabar_{F''}\rightrightarrows \Sigmabar_{F'}$.

We may now take $\{\Spec P_i\rightarrow \calF_{\calX}\}_{i\in I}$ to be the maps induced from $\{U_i\rightarrow \calX\}_{i\in I}$ and $\Spec Q_j$ to be the maps $\{\Spec Q_j\rightarrow F'\}_{j\in J}$ induced from $\{V_j\rightarrow R\}_{j\in J}$. By Proposition \ref{prop_smalltrop} (ii) and the universal property of colimits, the induced collection of tropicalization maps $\trop_{U_i}$ and $\trop_{V_j}$ descends to a well-defined continuous tropicalization map $\trop_\calX\colon \big\vert\calX^\beth\big\vert\rightarrow \Sigmabar_\calX$. 

Now let $\widetilde{U}=\bigsqcup_{k\in K} \widetilde{U}_k\rightarrow \calX$ be a strict smooth cover of $\calX$ by small logarithmic schemes $\{\widetilde{U}_k\}_{k\in K}$ that refines $\{U_i\}_{i\in I}$  and $V=\bigsqcup_{l\in L} V_l\rightarrow \widetilde{R}$ be a strict smooth cover of $\widetilde{R}=\widetilde{U}\times_\calX \widetilde{U}$ by small logarithmic schemes $\{\widetilde{V}_l\}_{l\in L}$ that refines $\{V_k\}_{j\in J}$. Then Proposition \ref{prop_smalltrop} (ii) shows that the induced diagram of tropicalization maps commutes and therefore they descend to the same tropicalization map 
\begin{equation*}\big\vert \calX^\beth\big\vert\longrightarrow \Sigmabar_\calX \ .
\end{equation*}

In general, given two strict smooth covers of $\calX$ by small logarithmic schemes, there is common refinement as above and the above reasoning shows that the construction of $\trop_\calX$ does not depend on the choice of the covers. Every morphism $U\rightarrow\calX$ is part of some strict smooth cover by small logarithmic schemes and therefore the diagram 
\begin{equation*}\begin{CD}
U^\beth @>\trop_U>>\sigmabar_{U}\\
@VVV @VVV\\
\big\vert\calX^\beth\big\vert @>\trop_\calX>>\Sigmabar_\calX 
\end{CD}\end{equation*}
commutes. This finishes the proof of (i). 

 For part (ii), let $f\colon \calX\rightarrow \calX'$ be a logarithmic morphism. Choose a strict smooth cover $U'=\bigsqcup_{i'} U_{i'}'$ of $\calX'$ by small logarithmic schemes $U_{i'}'$ and a strict smooth cover $V'=\bigsqcup_{j'} V_{j'}'$ of $R'=U'\times_{\calX'} U'$ by small logarithmic schemes $V_{j'}'$ as above, as well as a strict smooth cover $U=\bigsqcup_i U_i$ of $\calX$ by small logarithmic schemes $U_i$ and a strict smooth cover $V=\bigsqcup_j V_j$ of $R=U\times_\calX U$ by small logarithmic schemes $V_j$ such that $f$ naturally restricts to morphisms $U_i\rightarrow U_{i'}'$ and $V_j\rightarrow V_{j'}'$ for every $i$ and $j$ and some $i'$ and $j'$. Applying Proposition \ref{prop_smalltrop} to the diagram of such maps yields a diagram of morphisms of rational polyhedral cones that descends to a continuous map $\big\vert\Sigma(f)\big\vert\colon\vert\Sigma_\calX\vert\rightarrow\vert\Sigma_{\calX'}\vert$. This association is functorial in $f$. 

Let $U\rightarrow \calX$ be a smooth strict morphism and let $x$ be a geometric point in the deepest stratum of $U$. Choose a small smooth neighborhood $U'\rightarrow \calX'$ of $f(x)$. We can shrink $U$ such that $f$ restricts to $U\rightarrow U'$. By Proposition \ref{prop_smalltrop} this induces a map $\sigma_U\rightarrow\sigma_{U'}$ that makes the square 
\begin{center}\begin{tikzcd}
\sigma_U \arrow[r]\arrow[d]&\sigma_{U'}\arrow[d]\\
\Sigma_\calX\arrow[r]&\Sigma_{\calX'}
\end{tikzcd}\end{center}
commute and so $\big\vert \Sigma(f)\big\vert$ is a morphism of generalized cone complexes. By the definition of $\trop_\calX$, we may check the commutativity of the square
\begin{equation*}\begin{CD}
\big\vert\calX^\beth \big\vert @>\trop_\calX>>\Sigmabar_\calX\\
@V\vert f^\beth\vert VV @VV\Sigmabar(f)V\\
\big\vert (\calX')^\beth\big\vert @>\trop_{\calX'}>>\Sigmabar_{\calX'}
\end{CD}\end{equation*}
\'etale locally on $\calX$ and then this is precisely Proposition \ref{prop_smalltrop} (ii).

Finally, for part (iii), we recall that the case of a small logarithmically smooth scheme has been treated in \cite{Thuillier_toroidal}*{Corollaire 3.13} and the general case follows by descent along colimits (see \cite{Thuillier_toroidal}*{Proposition 3.29 and Corollaire 3.30} as well as \cite{AbramovichCaporasoPayne_tropicalmoduli}*{Proposition 6.1.4}). We remark that in \cite{AbramovichCaporasoPayne_tropicalmoduli} the authors work with the coarse moduli space $X$ of $\calX$, but, by \cite{Ulirsch_trop=quot}*{Proposition 3.8} the underlying topological space $\big\vert\calX^\beth\big\vert$ is naturally homeomorphic to $\big\vert X^\beth\big\vert$. 
\end{proof}

\begin{example}[Logarithmic schemes without monodromy]
Suppose that $X$ is a Zariski logarithmic scheme without monodromy. By Example \ref{example_logschemeswithoutmonodromy} above there is a natural characteristic morphism $\phibar_X\colon X\rightarrow F_X$ in a Kato fan. The generalized cone complex $\Sigma_X$ of $X$ is the cone complex that is given as the $\R_{\geq 0}$-valued points of $F_X$ and its canonical extension as the $\Rbar_{\geq 0}$-valued points of $F_X$ (see \cite{Ulirsch_functroplogsch}*{Section 3} for details). 

Following \cite[Section 6.1]{Ulirsch_functroplogsch}  the tropicalization map $\trop_X\mathrel{\mathop:}X^\beth\rightarrow \Sigmabar_X$ into the \emph{extended cone complex} $\Sigmabar_X=\Sigmabar_{F_X}=F_X(\Rbar_{\geq 0})$ associated to $X$ is defined as follows: A point $x\in X^\beth$ can be represented by a morphism $\underline{x}\mathrel{\mathop:}\Spec R\rightarrow (X,\calObar_{X})$, where $\calObar_X$ denotes the sharp monoid sheaf $\calO_X/\calO_X^\ast$ on $X$; its image $\trop_X(x)$ in $\Sigmabar_X$ is given by the composition
\begin{equation*}\begin{CD}
\Spec \Rbar_{\geq 0}@>\val^\#>>\Spec R@>\underline{x}>> (X,\calObar_X)@>\phibar_X >>F_X \ ,
\end{CD}\end{equation*}
where $\val^\#$ is the morphism of Kato fans induced by the valuation $\val\mathrel{\mathop:}R\rightarrow\Rbar_{\geq 0}$ on $R$. It has been shown in \cite{Ulirsch_functroplogsch} that $\trop_X$ is well-defined, continuous, and functorial with respect to logarithmic morphisms. 
\end{example}

%%%%%%%%%%%%%%%%%%%%%%%%%%%%%%%%%%%%%%%%%%%%%%%%%%%%%%

\section{Tropicalization and analytification} \label{section_trop=anal}
The purpose of this section is to prove Theorem \ref{thm_trop=anal} from the introduction. In the following Proposition \ref{prop_trop=quot} we first consider the case of an affine toric variety $U_P=\Spec k[P]$, defined by a fine and saturated monoid $P$, with its natural morphism $\phi_P\colon=\phi_{U_P}\colon U_P\rightarrow \calA_P$ to the Artin cone $\calA_P=\big[U_P\big/T\big]$ with big torus $T=\Spec k[P^{gp}]$. 

\begin{proposition}\label{prop_trop=quot}
Let $P$ be a fine and saturated monoid. Then there is a natural homeomorphism $\mu_P\mathrel{\mathop:}\big\vert\calA_P^\beth\big\vert\xrightarrow{\sim}\sigmabar_P$ that makes the diagram
\begin{center}\begin{tikzcd}
U_P^\beth \arrow[rd,"\phi_P^\beth"'] \arrow[rrd,bend left,"\trop_P"]&&\\
& \big\vert\calA_P^\beth\big\vert \arrow[r,"\sim","\mu_P"'] & \sigmabar_P
\end{tikzcd}\end{center}
commute.
\end{proposition}

Proposition \ref{prop_trop=quot} is essentially a special case of \cite[Theorem 1.1]{Ulirsch_trop=quot} and its proof  (provided below) is a variation of the proof in \cite{Ulirsch_trop=quot}. 

By \cite{Ulirsch_functroplogsch}*{Lemma 2.1 (i)} we may choose a toric submonoid $\widetilde{P}$ of $P$ such that $\widetilde{P}\oplus P^{tors}=P$. Since there are natural isomorphisms $\calA_P\simeq \calA_{\widetilde{P}}$ and $\sigma_P\simeq \sigma_{\widetilde{P}}$ and both the tropicalization map $\trop_P$ and the quotient map $U_P\rightarrow \calA_P$ factor as $U_P^\beth\rightarrow U_{\widetilde{P}}^\beth\rightarrow \sigmabar_P$ and 
$U_P\rightarrow U_{\widetilde{P}}\rightarrow \calA_P$ respectively, we may assume in the proof of Proposition \ref{prop_trop=quot} that $P$ is toric. 

We begin by recalling the construction of the non-Archimedean skeleton of the analytic space $U_P^\beth$ associated to the toric variety $U_P$, due to Thuillier \cite[Section 2]{Thuillier_toroidal}. Write $\chi^p$ for the character of $p\in P$ as a basis element of $k[P]=\bigoplus_{p\in P}k\cdot\chi^p$ and consider the natural continuous tropicalization map 
\begin{equation*}\begin{split}
\trop_P\mathrel{\mathop:}U_P^\beth&\longrightarrow\sigmabar_P=\Hom(P,\Rbar_{\geq 0})\\
x&\longmapsto \Big(p\mapsto -\log\vert\chi^p\vert_x\Big) 
\end{split}\end{equation*}
essentially as introduced in \cite{Kajiwara_troptoric} and \cite{Payne_anallimittrop}. It has a natural continuous section $J_P\mathrel{\mathop:}\sigmabar_P\longrightarrow U_P^\beth$ defined by associating to $u\in \sigmabar_P=\Hom(P,\Rbar_{\geq 0})$ the seminorm $J_P(u)$ on $k[P]$, which is given by
\begin{equation*}
J_P(u)(f)=\max_{p\in P}\vert a_p\vert \exp\big(-u(p)\big) 
\end{equation*}
for $f=\sum_{p\in P}a_p\chi^p\in k[P]$. The composition $\bfp_P=J_P\circ\trop_P\mathrel{\mathop:}U_P^\beth\rightarrow U_P^\beth$ is a continuous retraction map and given by associating to $x\in U_P^\beth$ the seminorm $\bfp_P(x)$ that is determined by
\begin{equation*}
\bfp_P(x)(f)=\max_{p\in P}\vert a_p\vert \vert\chi^p\vert_x
\end{equation*}
for $f=\sum_{p\in P}a_p\chi^p\in k[P]$. Its image $\frakS(U_P)$ is called the \emph{non-Archimedean skeleton} of $U_P^\beth$ and is naturally homeomorphic to $\sigmabar_P$ via $J_P$. Moreover, the results of \cite[Section 2.2]{Thuillier_toroidal} show that $\bfp_P$ is in fact a strong deformation retraction.

Denote by $\mu\mathrel{\mathop:}T\times U_P\rightarrow U_P$ the operation of $T=\Spec k[M]$, with $M=P^{gp}$, on $U_P$ and note that this morphism is induced by the homomorphism
\begin{equation*}\begin{split}
\mu^\#\mathrel{\mathop:} k[P]&\longrightarrow k[M]\otimes_kk[P]\\
\chi^p&\longmapsto \chi^p\otimes\chi^p \ .
\end{split}\end{equation*}
Moreover consider the second projection map $\pi\mathrel{\mathop:}T\times U_P\rightarrow U_P$ induced by the homomorphism
\begin{equation*}\begin{split}
\pi^\#\mathrel{\mathop:} k[P]&\longrightarrow k[M]\otimes_kk[P]\\
\chi^p&\longmapsto 1\otimes\chi^p \ .
\end{split}\end{equation*}

\begin{lemma}\label{lemma_otimeshat}
Let $x\in U_P^\beth$ and consider the point $\eta\hat{\otimes}x\in T^\beth\times U_P^\beth$ given by the seminorm
\begin{equation*}
\vert f\vert_{\eta\hat{\otimes}x}=\max_{m\in M}\vert a_m\vert \vert f_m\vert_x
\end{equation*}
for an element $f=\sum_{m\in M}a_m \chi^m\otimes f_m\in k[M]\otimes_kk[P]$ with unique regular functions $f_m\in k[P]$. Then we have
\begin{equation*}
\pi^\beth(\eta\hat{\otimes}x)=x
\end{equation*}
as well as 
\begin{equation*}
\mu^\beth(\eta\hat{\otimes}x)=\bfp_P(x) \ .
\end{equation*}
\end{lemma}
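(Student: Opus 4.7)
The plan is to verify both equalities by direct computation, unwinding the explicit descriptions of the comorphisms $\pi^\#$ and $\mu^\#$ through the formula defining $\eta\hat{\otimes}x$. By functoriality of the generic fiber functor, the image of $\eta\hat{\otimes}x$ under $\pi^\beth$ (respectively $\mu^\beth$) is the seminorm on $k[P]$ obtained by pulling back $\vert\cdot\vert_{\eta\hat{\otimes}x}$ along $\pi^\#$ (respectively $\mu^\#$). Hence it suffices to evaluate these pullbacks on an arbitrary element $g=\sum_{p\in P}b_p\chi^p\in k[P]$ and to read off the canonical expansion $\sum_{m\in M}a_m\chi^m\otimes f_m$ of the result in order to apply the defining formula.

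For the first identity I would observe that $\pi^\#(g)=1\otimes g$, so in the canonical expansion only the $m=0$ summand is nonzero, with $a_0=1$ and $f_0=g$. The defining formula therefore yields $\vert\pi^\#(g)\vert_{\eta\hat{\otimes}x}=\vert g\vert_x$, and so $\pi^\beth(\eta\hat{\otimes}x)=x$. For the second identity the coaction formula gives $\mu^\#(g)=\sum_{p\in P}b_p(\chi^p\otimes\chi^p)$; using the inclusion $P\hookrightarrow M$ (which holds because $P$ is integral) we read off $a_p=b_p$ and $f_p=\chi^p$ for $p\in P$, all other indices contributing zero. Consequently
\[
\vert\mu^\#(g)\vert_{\eta\hat{\otimes}x}=\max_{p\in P}\vert b_p\vert\,\vert\chi^p\vert_x,
\]
which is exactly the formula for $\bfp_P(x)(g)$ recalled just before the lemma. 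Hence $\mu^\beth(\eta\hat{\otimes}x)=\bfp_P(x)$.

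The only point that this argument quietly takes for granted is that the prescription $f\mapsto\max_m\vert a_m\vert\,\vert f_m\vert_x$ actually defines a bounded multiplicative seminorm on $k[M]\otimes_kk[P]$, rather than merely a subadditive upper bound. This is standard: it realizes $\eta\hat{\otimes}x$ as the Gauss-type tensor product of the trivial seminorm on $k[M]$ with the seminorm $x$, and multiplicativity is verified in the usual way by choosing a maximizing character on the $k[M]$ factor and a maximizing term for $x$ on the $k[P]$ factor and noting that their product is maximizing on the tensor product. Boundedness by $1$ on $k^\circ[M]\otimes_kk[P]$ follows from $x\in U_P^\beth$ and $\vert\chi^m\vert_\eta=1$. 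I do not expect any further obstacle in the argument.
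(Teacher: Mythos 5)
Your proof is correct and takes essentially the same approach as the paper: both arguments pull back the seminorm $\vert\cdot\vert_{\eta\hat{\otimes}x}$ along $\pi^\#$ and $\mu^\#$, read off the canonical expansion in $k[M]\otimes_k k[P]$, and apply the defining formula, arriving at $\vert g\vert_x$ and $\max_p\vert b_p\vert\,\vert\chi^p\vert_x=\bfp_P(x)(g)$ respectively. Your added remark about why the prescription actually defines a bounded multiplicative seminorm is a reasonable point of care that the paper leaves implicit.
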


\begin{proof}
Let $f=\sum_{p\in P}a_p\chi^p\in k[P]$. Then we have
\begin{equation*}
\vert f\vert_{\pi^\beth(\eta\hat{\otimes}x)}=\Big\vert\sum_{p\in P}a_p1\otimes\chi^p\Big\vert_{\eta\hat{\otimes}x}=\vert1\otimes f\vert_{\eta\hat{\otimes}x}=\vert f\vert_x
\end{equation*}
as well as 
\begin{equation*}
\vert f\vert_{\mu^\beth(\eta\hat{\otimes}x)}=\Big\vert\sum_{p\in P}a_p\chi^p\otimes\chi^p\Big\vert_{\eta\hat{\otimes}x}=\max_{p\in P}\vert a_p \vert \vert\chi^p\vert_x=\vert f\vert_{\bfp_P(x)}
\end{equation*}
and this implies the claim.
\end{proof}

\begin{proof}[Proof of Proposition \ref{prop_trop=quot}]
The topological space $\big\vert \big[U_P^\beth\big/T^\beth\big]\big\vert$ is the topological colimit of the two maps
\begin{equation}\label{eq_tropgroupoid}
(\pi^\beth,\mu^\beth\mathrel{\mathop:}T^\beth\times U_P^\beth\rightrightarrows U_P^\beth) \ .
\end{equation}
Therefore it is enough to show that the deformation retraction $\bfp_P\mathrel{\mathop:}U_P^\beth\rightarrow\frakS(U_P)$ makes the skeleton $\frakS(U_P)$ into a topological colimit of the above two maps. 
\begin{itemize}
\item Let $x, x'\in U_P^\beth$ and $y\in T^\beth\times U_P^\beth$ with $\pi^\beth(y)=x$ and $\mu^\beth(y)=x'$. In this case we have $\bfp_P(x)=\bfp_P(x')$, since 
\begin{equation*}\begin{split}
\big\vert\chi^{p}\big\vert_{x'}&=\big\vert \chi^p\big\vert_{\mu^\beth(y)}=\big\vert\chi^p\otimes\chi^p\big\vert_y\\&=\big\vert \chi^p\otimes 1\big\vert_y \cdot\big\vert 1\otimes \chi^p\big\vert_y=\big\vert1\otimes\chi^p\big\vert_y\\&=\big\vert\chi^p\big\vert_{\pi^\beth(y)}=\big\vert \chi^p\big\vert_x
\end{split}\end{equation*}
for all $p\in P$ using the fact that $\big\vert\chi^m\otimes 1\big\vert_y=1$ for all $m\in M$.
\item For $x\in U_P^\beth$ Lemma \ref{lemma_otimeshat} implies that there is a point $y=\eta\hat{\otimes}x\in T^\beth\otimes U_P^\beth$ such that $\pi^\beth(y)=x$ and $\mu^\beth(y)=\bfp_P(x)$. So, given two points $x,x'\in U_P^\beth$ such that $\bfp_P(x)=\bfp_P(x')$, their image in $\big\vert\big[U_P^\beth\big/T^\beth\big]\big\vert$ is equal as well. 
\end{itemize}

That is why the skeleton $\frakS(U_P)$ is the set-theoretic colimit of \eqref{eq_tropgroupoid} and, since $\bfp_P$ is continuous and proper, this is actually a colimit in the category of topological spaces. 
\end{proof}

\begin{proposition}\label{prop_anal=conecomplex}
Given a Kato stack $\calF$, there is a natural homeomorphism
\begin{equation*}
\mu_\calF\colon \big\vert\calA_{\calF}^\beth\big\vert \xlongrightarrow{\sim}\Sigmabar_{\calF} \ .
\end{equation*}
\end{proposition}

\begin{proof}
The generalized extended cone complex of $\calF$ is the defined to be the colimit of the diagram of all $\sigmabar_P$ associated to the combinatorial cone stack $\calF^{comb}$, i.e. the colimit over extended cones $\sigmabar_P$ associated to strict morphisms $\Spec P\rightarrow \calF$ with face maps induced from strict morphisms over $\calF$. By \cite{Ulirsch_trop=quot}*{Lemma 3.2 (ii) and Proposition 3.4 (ii)} the topological space $\big\vert \calA_\calF^\beth \big\vert$ is the colimit of a the diagram whose objects are the topological spaces $\big\vert\calA_P^\beth\big\vert$ indexed by the collection of strict morphisms $\Spec P\rightarrow \calF$ and whose arrows are induced by strict morphisms over $\calF$. Therefore Proposition \ref{prop_trop=quot} implies the claim.
\end{proof}

We now conclude this section and this article with the proof of Theorem \ref{thm_trop=anal}.

\begin{proof}[Proof of Theorem \ref{thm_trop=anal}]
By Proposition \ref{prop_anal=conecomplex} we have a natural homeomorphism between $\Sigmabar_\calX$ and the topological space $\big\vert \calA_{\calX}^\beth\big\vert$. It remains to show that the diagram
\begin{center}\begin{tikzcd}
\big\vert\calX^\beth\big\vert \arrow[rd,"\phi_\calX^\beth"']\arrow[rrd,bend left,"\trop_\calX"]&&\\
& \big\vert\calA_\calX^\beth\big\vert \arrow[r,"\sim","\mu_\calX"'] & \Sigmabar_\calX
\end{tikzcd}\end{center}
commutes. But this can be checked on strict small smooth open neighborhoods of $\calX$ and so we may assume that $\calX$ is representable by a small logarithmic  scheme $X$. Choose a lift of $P_X\xrightarrow{\sim}\Gamma(X,\Mbar_X)$ to a chart $\phi\colon P_X\rightarrow \calO_X$ of $X$. Then by Proposition \ref{prop_smalltrop} (i), the tropicalization map $\trop_X$ factors as 
\begin{equation*}
\trop_X\colon X^\beth \xrightarrow{\phi^\sharp} U_{P_X}^\beth \xrightarrow{\trop_{P_X}}\sigmabar_X=\sigmabar_{P_X}
\end{equation*}
and therefore, in this case, Proposition \ref{prop_trop=quot} implies the claim. 
\end{proof}

%%%%%%%%%%%%%%%%%%%%%%%%%%%%%%%%%%%%%%%%%%%%%%%%%%%%%%

%%%%%%%%%%%%%%%%%%%%%%%%%%%%%%%%%%%%%%%%%%%%%%%%%%%%%%

%%%%%%%%%%%%%%%%%%%%%%%%%%%%%%%%%%%%%%%%%%%%%%%%%%%%%%

\bibliographystyle{alpha}
\bibliography{biblio}{}

\end{document}